\newtheorem{theorem}{Theorem}[section]
\newtheorem{corollary}{Corollary}
\newtheorem{lemma}[theorem]{Lemma}
\theoremstyle{definition}
\newtheorem{remark}{Remark}
\newtheorem{assumption}{Assumption}
\newcommand{\dd}{{~\mathrm{d}}}
\newcommand{\ad}{{\mathrm{ad}}}
\newcommand{\ri}{{\mathrm{ri}}}
\newcommand{\Ker}{{\mathrm{Ker}}}
\newcommand{\Ima}{{\mathrm{Im}}}
\newcommand{\id}{{\mathrm{id}}}
\newcommand{\imp}{{\mathrm{imp}}}
\newcommand{\ft}{{\mathrm{ft}}}
\newcommand{\I}{{\imath}}
\DeclareMathOperator*{\argmin}{argmin}
\DeclareMathOperator*{\curl}{curl}
\DeclareMathOperator*{\dive}{div}
\title[Application examples of minimization based formulations] 
\author[Kha Van Huynh and Barbara Kaltenbacher]{}
\subjclass{Primary: 65J20; 35R30; Secondary: 35Q60.}
 \keywords{inverse problems for PDEs, regularization, electrical impedance tomography, identification of magnetic permeability, localization of sound sources}
 \email{van.huynh@aau.at}
 \email{barbara.kaltenbacher@aau.at}
\thanks{supported by the Austrian Science Fund FWF under grant P30054}
\thanks{$^*$ Corresponding author: Barbara Kaltenbacher}
\begin{document}
\maketitle

% Enter the first author's name and address:
\centerline{\scshape Kha Van Huynh}
\medskip
{\footnotesize
% please put the address of the first author
 \centerline{Department of Mathematics,
Alpen-Adria-Universit\"at Klagenfurt}
%   \centerline{Other lines}
   \centerline{9020 Klagenfurt, Austria}
} % Do not forget to end the {\footnotesize by the sign }

\medskip

\centerline{\scshape Barbara Kaltenbacher$^*$ }
\medskip
{\footnotesize
 \centerline{Department of Mathematics,
Alpen-Adria-Universit\"at Klagenfurt}
%   \centerline{Other lines}
   \centerline{9020 Klagenfurt, Austria}
}

\bigskip

% The name of the associate editor will be entered by an editorial staff
% "Communicated by the associate editor name" is not needed for special issue.
 \centerline{(Communicated by the associate editor name)}

%The abstract of your paper

\begin{abstract}
In this paper we extend a recent idea of formulating and regularizing inverse problems as minimization problems, so without using a forward operator, thus avoiding explicit evaluation of a parameter-to-state map. We do so by rephrasing three application examples in this minimization form, namely (a) electrical impedance tomography with the complete electrode model (b) identification of a nonlinear magnetic permeability from magnetic flux measurements (c) localization of sound sources from microphone array measurements.
To establish convergence of the proposed regularization approach for these problems, we first of all extend the existing theory. In particular, we take advantage of the fact that observations are finite dimensional here, so that inversion of the noisy data can to some extent be done separately, using a right inverse of the observation operator.
This new approach is actually applicable to a wide range of real world problems. 
\end{abstract}

%!TeX root=Topic1.tex
\section{Introduction}
An inverse problem of recovering some quantity $x$ from data $y$ can be expressed as an operator equation $F(x)=y$ with a forward operator $F$.  In practice, the quantity $x$ is usually contained in a mathematical model, e.g., a PDE or an ODE, involving also a state $u$, abstractly written as 
\begin{equation} \label{eq:model}
E(x,u)=0,
\end{equation}
and the data $y$ is collected from observations of the state $u$
\begin{equation} \label{eq:observation}
C(u)=y, 
\end{equation}
where $E$ and $C$ are mappings acting on function spaces
$$E: \widetilde{\mathcal{D}} \times V \to W, \qquad C: V \to Y$$
where $\widetilde{\mathcal{D}} \subseteq X$ and $X,Y,V,W$ are Banach spaces.
In this setting, $F=C \circ S$ is a composite function that concatenates the operator $C$ with the parameter-to-state map $S:\mathcal{D} \to V$ defined by $E(x,S(x))=0, \forall x \in \mathcal{D}$.
However, in order for $S$ to be well-defined, often restrictive assumptions on the parameter need to be made, i.e., the domain $\mathcal{D}$ of $S$ and $F$ will typically only be a strict subset of $\widetilde{\mathcal{D}}$.
Thus our aim is to completely avoid appearance of $S$, as has it already been previously done by means of all-at-once formulations, i.e., by considering \eqref{eq:model} and \eqref{eq:observation} as a system of equations for $x$ and $u$, see, e.g., \cite{KKV14,Kal16}.

An even more general approach to do so is to rewrite (\ref{eq:model}), (\ref{eq:observation}) as an equivalent  minimization problem
\begin{equation} \label{eq:minIP}
(x, u) \in \argmin \{\mathcal{J} (x, u; y): (x, u) \in M_\ad(y)\}
\end{equation}
for some cost function $\mathcal{J}$ and some admissible set $M_\ad(y)$, see \cite{Kal18}. 
Since we are interested in ill-posed problems and, instead of the exact data $y$, only a noisy version $y^{\delta}$ is available, we regularize this by considering
\begin{equation}\label{eq:minIP_noise}
\begin{split}
(x_\alpha^\delta, u_\alpha^\delta) \in \argmin \{ T_\alpha(x, u; y^\delta) = \mathcal{J} (x, u; y^\delta) + \alpha \cdot \mathcal{R}(x, u): \qquad \qquad \\
(x, u)\in M_\ad^\delta(y^\delta) \}.
\end{split}
\end{equation}
The regularization parameter $\alpha \in \mathbb{R}^m_+$ will be chosen according to the noise level $\delta$, the mapping $\mathcal{R}: X \times V \to \overline{\mathbb{R}}^m_+$ corresponds to regularization terms, and the set $M_\ad^\delta (y^\delta) \subset X \times V$ may contain additional constraints that can be used for stabilizing the problem in the sense of Ivanov regularization, see, e.g., \cite{KK18, IVT02, KRR16, LW13, NR14}.

\medskip 

The aim of this paper is to apply this approach to three exemplary practical problems, namely
\begin{itemize}
\item electrical impedance tomography using the complete electrode model;
\item determination of the nonlinear magnetic permeability from measurements of the magnetic flux;
\item localization of sound sources from microphone array measurements.
\end{itemize}

What these and many othe real world problems have in common is -- among others -- finite dimensionality of the observation space.
Therefore, inverting the observation operator (using a right inverse) is stable and to some extent allows to uncouple data inversion from the actual reconstruction process. This approach, which we think is of interest also on its own and applicable to a wide range of other problems, is described in Section \ref{sec:Preliminaries} where more concrete choices of the cost function $\mathcal{J}$ and the admissible set $M_\ad^\delta$ will be given. To do so, we require a somewhat extended version of the abstract background from  \cite[Section 3.1]{Kal18}, which we therefore provide in Section \ref{sec:abstractanalysis}. The main Section \ref{sec:ex} of this paper deals with the three above mentioned application examples in the respective subsections \ref{sec:CEM-EIT}, \ref{sec:MPP}, \ref{sec:SoundSources}.

\section{Convergence analysis}
\subsection{The abstract convergence theory revisited}\label{sec:abstractanalysis}
Like in \cite{Kal18}, we start with a completely general setting, which we slightly extend for our purpose (cf. Remark \ref{rem:adjust_ass:Kal18}). To this end, we first of all very briefly recall and summarize the assumptions and results from \cite[Section 3.1]{Kal18} on existence of minimizers $(x^\delta, u^\delta) := \left( x^\delta_{\alpha (\delta, y^\delta)}, u^\delta_{\alpha (\delta, y^\delta)} \right)$ of (\ref{eq:minIP_noise}) as well as their stability and convergence to a minimizer $(x^\dagger, u^\dagger)$ of (\ref{eq:minIP}).

\begin{assumption} \label{ass:Kal18}
{(\cite[Assumption 3.7]{Kal18})} Let a topology $\mathcal{T}$ and a norm $\|\cdot\|_B$ on $X \times V$ and $\bar \delta >0$ exist such that for the family of noisy data $(y^\delta)_{\delta \in (0,\bar \delta]}$ and any sequence $(y_n)_{n \in \mathbb{N}} \subset Y$ with $y_n \to y^\delta$ in $Y$ and for all
$$ (\tilde x^\delta, \tilde u^\delta) \in \argmin \left\{ \mathcal{J} (x, u; y^\delta) + \alpha (\delta, y^\delta) \cdot \mathcal{R}(x, u): (x, u) \in M_\ad^\delta(y^\delta) \right\} $$
(part of the assumptions below indeed guarantee that this set of minimizers is nonempty)
we have
\begin{enumerate}[label= (\alph*)]
\item $\forall \delta \in (0, \bar \delta], \exists n_0 \in \mathbb{N}, \forall n \ge n_0: (x^\dagger, u^\dagger) \in M_\ad^\delta (y^\delta) \cap M_\ad^\delta (y_n)$;
\item $\forall j \in \{ 1, \dots, m\}: \left( \mathcal{R}_j (x^\dagger, u^\dagger) < \infty \text{\; and \;} \exists \underline{r} \in \mathbb{R}: \mathcal{R}_j \ge \underline{r} \right)$;
\item $\forall c \in \mathbb{R}, \forall \alpha \in \mathbb{R}_+^m$, the sets  $\{ (x, u) \in \bigcup_{\delta \in (0, \bar \delta]} M_\ad^\delta (y^\delta): T_\alpha (x, u; y) \le c \}$ and $\{ (x, u) \in M_\ad^\delta (y^\delta): T_\alpha (x, u; y^\delta) \le c \}$ are $\mathcal{T}$ relatively compact for all $\delta \in (0, \bar \delta]$;
\item $\forall \alpha \in \mathbb{R}_+^m, \forall \delta \in (0, \bar \delta], \exists C_\alpha \ge T_\alpha (x^\dagger, u^\dagger; y^\delta), \exists \tilde{C}_\alpha > 0, \forall (x_n, u_n)_{n \in \mathbb{N}} \subset X \times V, (x_n, u_n) \in M_\ad^\delta (y_n):$
$$ \left( \forall n \in \mathbb{N}: T_\alpha (x_n, u_n; y^\delta) \le C_\alpha \right) \Rightarrow \left( \forall n \in \mathbb{N}: \| (x_n, u_n) \|_B \le \tilde{C}_\alpha \right); $$
\item $\forall \delta \in (0, \bar \delta]: M_\ad^\delta (y^\delta)$ is $\mathcal{T}$ closed;
\item $\forall (\delta_n)_{n \in \mathbb{N}}, (z_n)_{n \in \mathbb{N}} \subset Y, (x_n, u_n)_{n \in \mathbb{N}} \subset X \times V$ with $(x_n, u_n) \in M_\ad^{\delta_n} (z_n)$:
$$\left( \delta_n \to 0, \; z_n \to y, \; (x_n, u_n) \xrightarrow{\mathcal{T}} (x_0, u_0) \right) \Rightarrow \Big( (x_0, u_0) \in M_\ad (y) \Big);$$
\item $\forall j \in \{1,\dots,m\}: \mathcal{R}_j$ is $\mathcal{T}$ lower semicontinuous;
\item $\forall \delta \in (0, \bar \delta], \mathcal{J} (\cdot, \cdot; y^\delta) $ and $\mathcal{J} (\cdot, \cdot; y)$ are $\mathcal{T}$ lower semicontinuous;
\item $\forall \delta \in (0, \bar \delta]: \sup_{(x, u) \in \cup_{m\in\mathbb{N}} M_\ad^\delta (y_m)} \left| \mathcal{J} (x, u; y_n) - \mathcal{J} (x, u; y^\delta) \right| \to 0$ as $n \to \infty$;
\item $\limsup_{\delta \to 0} \sup \left\{ \mathcal{J} (x, u; y) - \mathcal{J} (x, u; y^\delta): (x, u) \in \bigcup_{d \in (0, \bar \delta]} M_\ad^d (y^d) \right\} \le 0$ if $y^\delta \to y$ in $Y$ as $\delta \to 0$;
\item If $y^\delta \to y$ in $Y$ as $\delta \to 0$ then $\forall j \in \{1,\dots,m\}:$\\
$\limsup_{\delta \to 0} \frac{1}{\alpha_j (\delta, y^\delta)} \left( \mathcal{J} (x^\dagger, u^\dagger; y^\delta) - \mathcal{J} (x^\dagger, u^\dagger; y) \right) < \infty$, \\
$\limsup_{\delta \to 0} \frac{1}{\alpha_j (\delta, y^\delta)} \left( \mathcal{J} (x^\dagger, u^\dagger; y) - \mathcal{J} (\tilde x^\delta, \tilde u^\delta; y^\delta) \right) < \infty$, \\
and $\alpha (\delta, y^\delta) \to 0$ as $\delta \to 0$.
\end{enumerate}
\end{assumption}

\begin{remark} \label{rem:adjust_ass:Kal18}
According to a careful check of the proofs of Proposition 3.4 and Theorem 3.6 in \cite{Kal18}, the sets, over which the suprema in Assumption \ref{ass:Kal18}(i),(j) are taken, can be shrunk under conditions
\begin{equation} \label{eq:adjust_ass:Kal18_J(x,u,y)_bounded}
\mathcal{J} (x^\dagger, u^\dagger; y) < \infty
\end{equation}
and
\begin{equation} \label{eq:adjust_ass:Kal18_J(x,u,.)_continuous}
\mathcal{J} (x^\dagger, u^\dagger; \cdot) \text{\; is continuous on a suitable subset of \;} Y.
\end{equation}
Assuming without loss of generality that $\max\{\alpha_j: j=1,\dots,m\} \le 1$, we replace those sets as follows.
\begin{itemize}
\item For Proposition 3.4 in \cite{Kal18}, the minimizers $(x^\delta_{\alpha n}, u^\delta_{\alpha n}) \in \argmin \{ \mathcal{J} (x, u; y_n) + \alpha (\delta, y_n) \cdot \mathcal{R}(x, u): (x,u) \in M_\ad^\delta (y_n)\}$ satisfy
\begin{equation*}
\begin{split}
\mathcal{J} (x^\delta_{\alpha n}, u^\delta_{\alpha n}; y_n) &\le \mathcal{J} (x^\dagger, u^\dagger; y_n) + \alpha \cdot \mathcal{R} (x^\dagger, u^\dagger) - \alpha \cdot \mathcal{R} (x^\delta_{\alpha n}, u^\delta_{\alpha n}) \\
&\le \mathcal{J} (x^\dagger, u^\dagger; y_n) - \mathcal{J} (x^\dagger, u^\dagger; y^\delta) + \mathcal{J} (x^\dagger, u^\dagger; y^\delta) \\
& \qquad + \alpha \cdot \mathcal{R} (x^\dagger, u^\dagger) - \alpha \cdot \mathcal{R} (x^\delta_{\alpha n}, u^\delta_{\alpha n}) \\
& \le 1 + \mathcal{J} (x^\dagger, u^\dagger; y^\delta) + \sum_{j: \mathcal{R}_j (x^\dagger, u^\dagger) \ge 0} \mathcal{R}_j (x^\dagger, u^\dagger) + m \max \{ 0, -\underline{r} \} \\
&=: c (x^\dagger, u^\dagger; y^\delta),
\end{split}
\end{equation*}
where $\mathcal{J} (x^\dagger, u^\dagger; y_n) - \mathcal{J} (x^\dagger, u^\dagger; y^\delta) \le 1$ with $n$ large enough due to the continuity of $\mathcal{J} (x^\dagger, u^\dagger; \cdot)$ at $y^\delta$. So the set $\cup_{m\in\mathbb{N}} M_\ad^\delta (y_m)$ in Assumption \ref{ass:Kal18}(i) can be replaced by
\begin{equation} \label{eq:adjust_ass:Ka18_set(i)}
\bigcup_{m\in\mathbb{N}} \left( M_\ad^\delta (y_m) \cap \{ (x,u): \mathcal{J} (x,u;y_m) \le c (x^\dagger, u^\dagger; y^\delta) \} \right).
\end{equation}
\item Likewise, for Theorem 3.6 in \cite{Kal18}, the minimizers $(\tilde x^\delta, \tilde u^\delta) \in \argmin \{ \mathcal{J} (x, u; y^\delta) + \alpha (\delta, y^\delta) \cdot \mathcal{R}(x, u): (x, u) \in M_\ad^\delta(y^\delta) \}$ satisfy
\begin{equation*}
\begin{split}
\mathcal{J} (\tilde x^\delta, \tilde u^\delta; y^\delta) \le c (x^\dagger, u^\dagger; y),
\end{split}
\end{equation*}
since $\mathcal{J} (x^\dagger, u^\dagger; y^\delta) - \mathcal{J} (x^\dagger, u^\dagger; y) \le 1$ for $\delta$ small enough due to the continuity of $\mathcal{J} (x^\dagger, u^\dagger; \cdot)$ at $y$. So the set $\cup_{d \in (0, \bar \delta]} M_\ad^d (y^d)$ in Assumption \ref{ass:Kal18}(j) can be replaced by
\begin{equation} \label{eq:adjust_ass:Ka18_set(i)}
\bigcup_{d \in (0, \bar \delta]} \left( M_\ad^d (y^d) \cap \{ (x,u): \mathcal{J} (x,u;y^d) \le c (x^\dagger, u^\dagger; y) \} \right).
\end{equation}
\end{itemize}
\end{remark}

%!TeX root=Topic1.tex
\subsection{A minimization based approach using data inversion} \label{sec:Preliminaries}

In this section, we will introduce an approach to partially uncouple data dependence from reconstruction within the  minimization form (\ref{eq:minIP}), (\ref{eq:minIP_noise}) by using a right inverse operator $C^\ri$ of $C$ from (\ref{eq:observation}). To this end, we assume that the observation operator $C$ is \emph{linear} as is the case in many practical  problems. The linear operator $C^\ri$ is supposed to be a right inverse of $C$ in the sense that 
\begin{equation} \label{eq:condition_Cri}
(CC^\ri)|_{\Ima (C)} = \id_{\Ima (C)}
\end{equation}
where $\Ima(C) := \{C(u): u \in V\}$. If $V$, $Y$ are Hilbert spaces, one of the possible options to define $C^\ri$ is via the Moore-Penrose inverse operator $C^\dagger$, see \cite{EHN96}, with the domain $\mathcal{D} (C^\dagger) = \Ima (C) \oplus \Ima (C)^\bot \subset Y$. It is known that $C^\dagger$ is bounded iff $\Ima (C)$ is closed, which is also equivalent to $\mathcal{D} (C^\dagger) = Y$. Thus this approach is always applicable in case of a finite dimensional observation space $Y$.

Now, assuming that $C^\ri$ is well-defined and bounded on the entire space $Y$, by writing $u = C^\ri (y) + \hat u$ with $C(\hat u) = 0$, $y \in \Ima(C)$ we rephrase the problem (\ref{eq:model}), (\ref{eq:observation}) as
\begin{equation} \label{eq:model_observation_mixed}
\left\{
\begin{array}{l}
E(x, \hat u + C^\ri (y)) = 0, \\
(x, \hat u) \in X \times \Ker(C),
\end{array}
\right.
\end{equation}
with $\Ker(C) := \{ \hat u \in V: C(\hat u) = 0\}$. 
Thus we have decomposed the state into 
\begin{itemize}
\item[(a)] a part $C^\ri (y)$, which depends on the data and is therefore subject to noise, which, however, propagates into this part in a stable way; 
\item[(b)] a part $\hat u \in \Ker(C)$, which is data independent to which -- along with $x$ -- minimization will be applied in order to enforce the model equation to hold (approximately).
\end{itemize} 

Assuming that the priori information $\widetilde{\mathcal{R}} (x^\dagger, \hat u^\dagger) \le \rho$, with some radius $\rho>0$, and some functional $\widetilde{\mathcal{R}}: X \times V \to \overline{\mathbb{R}}$ is known, we consider 
\begin{equation} \label{eq:M_ad}
M_\ad (y) = \{ (x, \hat u) \in X \times \Ker(C): \widetilde{\mathcal{R}} (x, \hat u) \le \rho \}
\end{equation}
and 
\begin{equation} \label{eq:J}
\mathcal{J} (x, \hat u; y) = \mathcal{Q}_E (x, \hat u; y),
\end{equation}
where $\mathcal{Q}_E: X \times V \times Y \to \overline{\mathbb{R}}$ satisfies
\begin{equation} \label{eq:conditionQ}
\begin{split}
&\forall (x, \hat u, y) \in X \times V \times Y:\\
&\qquad \qquad \mathcal{Q}_E (x, \hat u; y) \ge 0 \text{\quad and \quad} \\
&\qquad \qquad \left( \forall \hat{u}\in \Ker(C)\, : E(x, \hat u + C^\ri (y)) = 0 \Leftrightarrow \mathcal{Q}_E (x, \hat u; y) = 0 \right).
\end{split}
\end{equation} 
With noisy data $y^\delta$ being available, we use the admissible set
\begin{equation} \label{eq:M_ad^delta} 
\begin{split}
M_\ad^\delta (y^\delta) &= \left\{ (x, \hat u) \in X \times V: \; \mathcal{S} (C(\hat u+C^\ri(y^\delta)), y^\delta) \le \tau \delta, \; \widetilde{\mathcal{R}} (x, \hat u) \le \rho \right\}
\end{split}
\end{equation}
with some $\tau>1$ and a discrepancy measure $\mathcal{S}: Y \times Y \to \overline{\mathbb{R}}$ such that
\begin{equation} \label{eq:conditionS_definiteness}
\forall y_1, y_2 \in Y: \qquad \mathcal{S} (y_1, y_2) \ge 0 \text{\quad and \quad} (\mathcal{S} (y_1, y_2) = 0 \Leftrightarrow y_1=y_2),
\end{equation}
\begin{equation} \label{eq:conditionS_delta}
\mathcal{S} (y, y^\delta) \le \delta,
\end{equation}
and
\begin{equation} \label{eq:conditionS_delta2}
\mathcal{S} (CC^\ri (y^\delta), y^\delta) < \tau \delta.
\end{equation}
This framework also includes discrepancy measures that are not necessarily defined by a metric or norm, for example, the Kullback Leibler divergence or the Bregman distance with respect to some proper convex functional.
In the special case of $\mathcal{S}$ being translation invariant, i.e., 
\[
\forall y_1,y_2,y_3\in Y: \ \mathcal{S}(y_1,y_2)=\mathcal{S}(y_1+y_3,y_2+y_3)
\]
and if $\Ima(C) \equiv Y$, i.e., $CC^\ri \equiv \id_Y$, the admissible set simplifies to
\begin{equation} \label{eq:M_ad^delta_reduced}
\begin{split}
M_\ad^\delta &= \left\{ (x, \hat u) \in X \times V: \; \mathcal{S} (C(\hat u), 0) \le \tau \delta, \; \widetilde{\mathcal{R}} (x, \hat u) \le \rho \right\}.
\end{split}
\end{equation}
To guarantee well-definedness, stability and convergence of the regularized minimization problems defined by \eqref{eq:minIP_noise} with \eqref{eq:J}, \eqref{eq:M_ad^delta}, we make some assumptions that are largely being used also in previous publications, see, e.g., \cite{Kal18, HKPS07, HW13, SGG+09}.

\begin{assumption} \label{ass:Maao}
Let a topology $\mathcal{T}$ and a norm $\| \cdot \|_B$ on $X \times V$ exist such that
\begin{enumerate}[label= (\roman*)]
\item $\widetilde{\mathcal{R}} (x^\dagger, \hat u^\dagger) \le \rho$;
\item $\mathcal{R}_j (x^\dagger, \hat u^\dagger) < \infty \text{\; and \;} \exists \underline{r} \in \mathbb{R}: \mathcal{R}_j \ge \underline{r}$ \; for all $j \in \{1, \dots, m\}$;
\item for all $z_1, z_2 \in Y$ and $c>0$, the sublevel set
\begin{equation*}
\begin{split}
&L_c =  \Big\{ (x, \hat u) \in X \times V: \\
& \quad \max \{ \mathcal{Q}_E (x, \hat u; z_1), \mathcal{R}_1 (x, \hat u), \dots, \mathcal{R}_m (x, \hat u), \widetilde{\mathcal{R}} (x, \hat u), \mathcal{S} (C(\hat u+C^\ri (z_2)), z_2) \} \le c \Big\}
\end{split}
\end{equation*}
is $\mathcal{T}$ compact and $\| \cdot \|_B$ bounded;
\item for all $z \in Y$, the maps $(x, \hat u) \mapsto \mathcal{Q}_E (x, \hat u; z)$, $(x, \hat u) \mapsto \mathcal{S} (C(\hat u+C^\ri (z)), z)$, $\mathcal{R}$ and $\widetilde{\mathcal{R}}$ are $\mathcal{T}$ lower semicontinuous;
\item the family of mappings $\big( \zeta \mapsto \mathcal{S}  (z+CC^\ri (\zeta), \zeta) \big)_{z \in Z}$ is uniformly continuous on $Z = \{ C(\hat u): \exists x \in X: \widetilde{\mathcal{R}} (x, \hat u) \le \rho \}$, i.e., 
$$\lim_{\zeta \to \zeta_0} \sup_{z \in Z} |\mathcal{S} (z+CC^\ri (\zeta), \zeta) - \mathcal{S} (z+CC^\ri (\zeta_0), \zeta_0)| = 0, \quad \forall \zeta_0 \in Y.$$
\end{enumerate}
\end{assumption}

\begin{remark} \label{rem:ass:Maao(v)}
If the admissible set (\ref{eq:M_ad^delta_reduced}) is used, regardless of whether or not $\Ima(C) \equiv Y$ holds, the condition (\ref{eq:conditionS_delta2}) and the Assumption \ref{ass:Maao}(v) can be dropped.
\end{remark}

Differently from \cite{Kal18}, we here have to deal with a model misfit functional $\mathcal{Q}$ that depends on the data and therefore make some continuity assumptions concerning this dependence. 
\begin{assumption}\label{ass:Maao_newcondition}
For all solution $(x^\dagger, \hat u^\dagger)$ to (\ref{eq:model_observation_mixed}) with the exact data $y$,
\begin{enumerate} [label = (\roman*)]
\item there exists a constant $\bar \delta >0$ such that
$$\lim_{z_2 \to z_1} \sup_{(x, \hat u) \in M} \left| \mathcal{Q}_E (x, \hat u; z_2) - \mathcal{Q}_E (x, \hat u; z_1) \right| = 0, \quad \forall z_1 \in Z,$$
where
\begin{itemize}
\item $Z$ is an open set that contains $\{ z \in Y: \mathcal{S} (y,z) \le \bar \delta \}$, for example, $Z = \cup_{\{ z \in Y: \mathcal{S} (y,z) \le \bar \delta \}} B_{\|\cdot\|_{Y}} (z,1)$,
\item $M = \cup_{z \in Z } (M_\ad^{\bar \delta} (z) \cap  M_z)$ with $M_z = \{ (x, \hat u): \mathcal{Q}_E (x, \hat u; z) \le c(x^\dagger, \hat u^\dagger; y) \}$ and $c(x^\dagger, \hat u^\dagger; y)$ as in Remark \ref{rem:adjust_ass:Kal18};
\end{itemize}
\item there exists a nondecreasing function $\gamma: [0,\infty] \to [0,\infty]$ such that
$$\mathcal{Q}_E (x^\dagger, \hat u^\dagger; z) - \mathcal{Q}_E (x^\dagger, \hat u^\dagger; y) \le \gamma (\mathcal{S} (y, z)), \quad \forall z \in Y.$$
\end{enumerate}
\end{assumption}

\begin{remark}\label{rem:Ass3}
In case of $\mathcal{Q}_E (x, \hat u; z)$ and $\mathcal{S}$ being defined by norms
\[
\mathcal{Q}_E(x, \hat u; z)=\sum_{i=1}^I \mathcal{Q}_i(x, \hat u; z)=\frac12\sum_{i=1}^I\|D_i(x, \hat u)C^{ri}z+b_i(x, \hat u)\|_W^2, 
\]
with linear operators $D_i(x, \hat u):V\to W$, as well as $\mathcal{S}(z_1,z_2)=\|z_1-z_2\|_Y$, as in the three examples of Section \ref{sec:ex}, we can easily verify Assumption \ref{ass:Maao_newcondition} with $\gamma(t)=c\cdot(1+t)\cdot t$ for some constant $c$, by using the estimate
\[
\begin{split}
&|\mathcal{Q}_E(x, \hat u; z_1)-\mathcal{Q}_E(x, \hat u; z_2)|\\
&\quad = \left|\sum_{i=1}^I (\sqrt{\mathcal{Q}_i(x, \hat u; z_1)}+\sqrt{\mathcal{Q}_i(x, \hat u; z_2)})
(\sqrt{\mathcal{Q}_i(x, \hat u; z_1)}-\sqrt{\mathcal{Q}_i(x, \hat u; z_2)})\right|
\\
&\quad\leq\tfrac{1}{\sqrt{2}}\sum_{i=1}^I (2\sqrt{\mathcal{Q}_E(x, \hat u; z_2)}+\tfrac{1}{\sqrt{2}}\|D_i(x, \hat u)C^{ri}(z_1-z_2)\|_W) \|D_i(x, \hat u)C^{ri}(z_1-z_2)\|_W\\
&\quad\leq\sum_{i=1}^I (\sqrt{2}\sqrt{\mathcal{Q}_E(x, \hat u; z_2)}+\tfrac12 \|D_i(x, \hat u)C^{ri}\|_{Y\to W} \mathcal{S} (z_1, z_2))\|D_i(x, \hat u)C^{ri}\|_{Y\to W} \mathcal{S} (z_1, z_2)
\,.
\end{split}
\]
\end{remark}

Under these assumptions we obtain the following result.

\begin{theorem} \label{the:main_theorem}
If Assumption \ref{ass:Maao}, Assumption \ref{ass:Maao_newcondition} and the conditions (\ref{eq:conditionQ}), (\ref{eq:conditionS_definiteness}), (\ref{eq:conditionS_delta}), (\ref{eq:conditionS_delta2}) are satisfied, then we achieve well-definedness, stability and convergence as follows. For any family of noisy data $(y^\delta)_{\delta \in (0,\bar \delta]}$,
\begin{enumerate}[label = (\alph*)]
\item $\forall \delta \in (0, \bar \delta]$, $\forall \alpha \in \mathbb{R}_+^m$, a minimizer of (\ref{eq:minIP_noise}) with (\ref{eq:J}), (\ref{eq:M_ad^delta}) exists;
\item for each $\delta \in (0, \bar \delta]$, for any sequence $(y_n)_{n \in \mathbb{N}} \subset Y$ with $y_n \to y^\delta$ in $Y$ as $n\to \infty$, the sequence of corresponding minimizers is $\| \cdot \|_B$ bounded.
\item If, additionally, the regularization parameter choice satisfies
$$ \alpha (\delta, y^\delta) \to 0 \text{\quad and \quad} \frac{\gamma(\delta)}{\alpha_j (\delta, y^\delta)} \le c, \forall j \in \{1,\dots,m\} \text{\qquad as \;} \delta \to 0,$$
for some $c \in \mathbb{R}$, then, as $\delta \to 0$, $y^\delta \to y$, the family of minimizers $(x^\delta_{\alpha (\delta, y^\delta)}, \hat u^\delta_{\alpha (\delta, y^\delta)})$ of (\ref{eq:minIP_noise}) with (\ref{eq:J}), (\ref{eq:M_ad^delta}) converges $\mathcal{T}$ subsequentially to a solution of the inverse problem with exact data, that is, it has a $\mathcal{T}$ convergent subsequence and the limit $(x^\dagger, \hat u^\dagger)$ forms a solution $(x^\dagger, \hat u^\dagger+C^{ri}y)$ of (\ref{eq:model}), (\ref{eq:observation}). 
\end{enumerate}
\end{theorem}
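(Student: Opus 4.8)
The plan is to derive all three conclusions from the abstract machinery recalled in Section~\ref{sec:abstractanalysis}, namely Proposition~3.4 and Theorem~3.6 of \cite{Kal18} together with the sharpened set choices of Remark~\ref{rem:adjust_ass:Kal18}. Concretely, with $\mathcal{J}=\mathcal{Q}_E$ as in \eqref{eq:J}, the admissible sets \eqref{eq:M_ad}, \eqref{eq:M_ad^delta}, and the full state reconstructed as $u^\dagger=\hat u^\dagger+C^\ri(y)$, it suffices to verify every item (a)--(k) of Assumption~\ref{ass:Kal18} together with the two auxiliary conditions \eqref{eq:adjust_ass:Kal18_J(x,u,y)_bounded}, \eqref{eq:adjust_ass:Kal18_J(x,u,.)_continuous} of that remark. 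Two identities will be used repeatedly: for $\hat u\in\Ker(C)$ one has $C(\hat u+C^\ri(z))=CC^\ri(z)$, and for exact data $y\in\Ima(C)$ the right-inverse property \eqref{eq:condition_Cri} gives $CC^\ri(y)=y$; in particular $\mathcal{Q}_E(x^\dagger,\hat u^\dagger;y)=0$ by \eqref{eq:conditionQ}, so \eqref{eq:adjust_ass:Kal18_J(x,u,y)_bounded} holds, and \eqref{eq:adjust_ass:Kal18_J(x,u,.)_continuous} will follow from Assumption~\ref{ass:Maao_newcondition}(i).

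I would first dispatch the routine items. Item (b) is exactly Assumption~\ref{ass:Maao}(ii). For the compactness item (c) and the boundedness item (d), I would observe that on $M_\ad^\delta(\cdot)$ the constraints already enforce $\widetilde{\mathcal{R}}\le\rho$ and $\mathcal{S}(C(\hat u+C^\ri(\cdot)),\cdot)\le\tau\delta$, that a bound $T_\alpha\le c$ together with $\mathcal{Q}_E\ge0$ and the lower bounds $\mathcal{R}_j\ge\underline r$ yields upper bounds on each $\mathcal{R}_j$ and on $\mathcal{Q}_E$ (for fixed $\alpha$), and that consequently the relevant sublevel sets sit inside a set $L_c$ of Assumption~\ref{ass:Maao}(iii); this gives both $\mathcal{T}$ relative compactness and the $\|\cdot\|_B$ bound. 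The closedness (e) and the lower semicontinuity items (g), (h) are immediate from Assumption~\ref{ass:Maao}(iv), sublevel sets of lower semicontinuous functionals being closed. For the feasibility item (a), membership of $(x^\dagger,\hat u^\dagger)$ in $M_\ad^\delta(y^\delta)$ follows because $C(\hat u^\dagger+C^\ri(y^\delta))=CC^\ri(y^\delta)$ and $\mathcal{S}(CC^\ri(y^\delta),y^\delta)<\tau\delta$ by \eqref{eq:conditionS_delta2}, while membership in $M_\ad^\delta(y_n)$ for $n$ large uses the strict inequality \eqref{eq:conditionS_delta2} together with continuity of $\zeta\mapsto\mathcal{S}(CC^\ri(\zeta),\zeta)$, the $z=0$ instance of Assumption~\ref{ass:Maao}(v) (note $0=C\hat u^\dagger\in Z$).

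Next I would treat the data-dependence items (i)--(k), where the set-shrinking of Remark~\ref{rem:adjust_ass:Kal18} is essential so that the suprema are taken over the set $M$ appearing in Assumption~\ref{ass:Maao_newcondition}(i). One checks $y^\delta\in Z$ via $\mathcal{S}(y,y^\delta)\le\delta\le\bar\delta$ from \eqref{eq:conditionS_delta}, and $(x^\dagger,\hat u^\dagger)\in M_\ad^{\bar\delta}(y)\cap M_y\subset M$ using $\mathcal{Q}_E(x^\dagger,\hat u^\dagger;y)=0$; continuity of $\mathcal{Q}_E(x^\dagger,\hat u^\dagger;\cdot)$ at $y^\delta$ and at $y$ then legitimizes the adjusted sets and yields \eqref{eq:adjust_ass:Kal18_J(x,u,.)_continuous}. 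Items (i) and (j) are then exactly the uniform data-continuity of $\mathcal{Q}_E$ over $M$, i.e.\ Assumption~\ref{ass:Maao_newcondition}(i). For (k), the first limit superior is controlled by $\mathcal{Q}_E(x^\dagger,\hat u^\dagger;y^\delta)-\mathcal{Q}_E(x^\dagger,\hat u^\dagger;y)\le\gamma(\mathcal{S}(y,y^\delta))\le\gamma(\delta)$ from Assumption~\ref{ass:Maao_newcondition}(ii) together with the parameter choice $\gamma(\delta)/\alpha_j\le c$; the second is trivial since $\mathcal{Q}_E(x^\dagger,\hat u^\dagger;y)=0\le\mathcal{Q}_E(\tilde x^\delta,\tilde u^\delta;y^\delta)$, and $\alpha(\delta,y^\delta)\to0$ is assumed. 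With (a)--(k) in hand, Proposition~3.4 and Theorem~3.6 of \cite{Kal18} deliver the three assertions.

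I expect the single genuinely delicate point to be the admissible-set stability item (f): from $(x_n,\hat u_n)\in M_\ad^{\delta_n}(z_n)$ with $\delta_n\to0$, $z_n\to y$ and $(x_n,\hat u_n)\xrightarrow{\mathcal{T}}(x_0,\hat u_0)$ I must force the limit back into the \emph{exact} admissible set, whose defining constraint is $\hat u_0\in\Ker(C)$ rather than a relaxed discrepancy bound. The $\widetilde{\mathcal{R}}(x_0,\hat u_0)\le\rho$ half is lower semicontinuity. For the kernel membership both the noise level and the data move, so neither lower semicontinuity in $(x,\hat u)$ alone nor continuity in the data alone suffices; the two must be combined. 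Writing $C(\hat u_n+C^\ri(z_n))=C\hat u_n+CC^\ri(z_n)$ with $C\hat u_n\in Z$, the uniform continuity of Assumption~\ref{ass:Maao}(v) transfers the constraint from the moving data $z_n$ to the fixed data $y$, giving $\mathcal{S}(C\hat u_n+CC^\ri(y),y)\le\tau\delta_n+o(1)\to0$; the lower semicontinuity in $(x,\hat u)$ from Assumption~\ref{ass:Maao}(iv) then yields $\mathcal{S}(C\hat u_0+CC^\ri(y),y)=0$, and definiteness \eqref{eq:conditionS_definiteness} with $CC^\ri(y)=y$ forces $C\hat u_0=0$. This is precisely the step that motivates Assumption~\ref{ass:Maao}(v) and that, when the reduced set \eqref{eq:M_ad^delta_reduced} is used instead, permits its removal as recorded in Remark~\ref{rem:ass:Maao(v)}.
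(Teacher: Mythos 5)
Your proposal is correct and follows essentially the same route as the paper's own proof: reducing everything to a verification of Assumption \ref{ass:Kal18}(a)--(k) via Assumptions \ref{ass:Maao}, \ref{ass:Maao_newcondition} and the adjusted sets of Remark \ref{rem:adjust_ass:Kal18}, with the same key steps --- the strict inequality \eqref{eq:conditionS_delta2} plus Assumption \ref{ass:Maao}(v) for item (a), the combination of lower semicontinuity, uniform data-continuity and definiteness of $\mathcal{S}$ for item (f), and the $\gamma(\delta)/\alpha_j$ bound together with $\mathcal{Q}_E(x^\dagger,\hat u^\dagger;y)=0$ for item (k). The only differences are cosmetic: you spell out slightly more detail (e.g., why the relevant sublevel sets embed into $L_c$, and why $(x^\dagger,\hat u^\dagger)\in M$ and $y^\delta\in Z$) than the paper does.
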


\begin{remark}\label{rem:uniqueness}
In Theorem \ref{the:main_theorem}, if the solution $(x^\dagger, \hat u^\dagger)$ is unique then $(x^\delta_{\alpha (\delta, y^\delta)}, \hat u^\delta_{\alpha (\delta, y^\delta)}) \xrightarrow{\mathcal{T}} (x^\dagger, \hat u^\dagger)$.
However, uniqueness is unlikely to hold with finite dimensional data.
\end{remark} 

\begin{proof} By (\ref{eq:conditionQ}), we see that $(x^\dagger, \hat u^\dagger)$ is a solution to (\ref{eq:model_observation_mixed}) if and only if it is a solution to (\ref{eq:minIP}). The rest of the proof consists of checking the items of Assumption \ref{ass:Kal18} where $u$ is replaced by $\hat u$.
\begin{itemize}
\item Assumption \ref{ass:Kal18}(a) follows from Assumption \ref{ass:Maao}(i) and 
(\ref{eq:conditionS_delta2}) which, due to the fact that $C\hat u^\dagger=0$, implies
\[
\mathcal{S} (C(\hat u^\dagger+C^\ri (y^\delta)), y^\delta)
=\mathcal{S} (CC^\ri (y^\delta)), y^\delta)<\tau\delta
\]
and therefore 
\[
\begin{split}
&\mathcal{S} (C(\hat u^\dagger+C^\ri (y_n)), y_n)\\
&\quad= \mathcal{S} (C(\hat u^\dagger+C^\ri (y^\delta)), y^\delta)
+\Bigl[\mathcal{S} (C(\hat u^\dagger+C^\ri (y_n)), y_n)-\mathcal{S} (C(\hat u^\dagger+C^\ri (y^\delta)), y^\delta)\Bigr]\\
&\quad\leq\tau\delta
\end{split}
\]
for all $n$ large enough, since the first term is strictly smaller than $\tau\delta$ by \eqref{eq:conditionS_delta2} and the term in brackets tends to zero by Assumption \ref{ass:Maao}(v).
\item Assumption \ref{ass:Kal18}(b) is exactly Assumption \ref{ass:Maao}(ii).
\item Assumption \ref{ass:Kal18}(c),(d) follow from the $\mathcal{T}$ compactness and $\| \cdot \|_B$ boundedness of $L_c$ as in Assumption \ref{ass:Maao}(iii).
\item Assumption \ref{ass:Kal18}(e) follows from the $\mathcal{T}$ lower semicontinuity of $\widetilde{\mathcal{R}}$ and of $(x, \hat u) \mapsto \mathcal{S} (C(\hat u+C^\ri(z)), z), \forall z \in Y$ according to Assumption \ref{ass:Maao}(iv).
\item Assumption \ref{ass:Kal18}(f) can be obtained by using the fact that $y\in \Ima(C)$ and $(x_n,\hat{u}_n)\in M_\ad^\delta (y_n)$ defined by \eqref{eq:M_ad^delta} with $y_n$ in place of $y^\delta$, as well as  Assumption \ref{ass:Maao}(iv),(v) to get
\begin{equation*}
\begin{split}
\mathcal{S} (C(\hat u_0)+y, y) &= \mathcal{S} (C(\hat u_0+C^{ri}(y)), y) \le \liminf_{n \to \infty} \mathcal{S} (C(\hat u_n+C^\ri(y)), y ) \\
&= \liminf_{n \to \infty} \big( \mathcal{S} (C(\hat u_n+C^\ri(z_n)), z_n) \\
& \qquad \qquad \quad + \mathcal{S} (C(\hat u_n+C^\ri(y)), y) - \mathcal{S} (C(\hat u_n+C^\ri(z_n)), z_n) \big) \\
& \le \limsup_{n \to \infty} \big( \tau \delta_n \ + \mathcal{S} (C(\hat u_n+C^\ri(y)), y ) - \mathcal{S} (C(\hat u_n+C^\ri(z_n)), z_n) \big) \\
& \le 0\,,
\end{split}
\end{equation*}
which implies $C(\hat u_0) =0$ by definiteness of $\mathcal{S}$ according to (\ref{eq:conditionS_definiteness}).
\item Assumption \ref{ass:Kal18}(g),(h) follows directly from Assumption \ref{ass:Maao}(iv).
\item Assumption \ref{ass:Kal18}(i),(j) adjusted by Remark \ref{rem:adjust_ass:Kal18} follow from Assumption \ref{ass:Maao_newcondition}(i), noting that $\mathcal{Q}_E (x^\dagger, \hat u^\dagger; y) = 0$, $ (x^\dagger, \hat u^\dagger) \in M$ and ``the suitable subset of $Y$'' in (\ref{eq:adjust_ass:Kal18_J(x,u,.)_continuous}) is $Z$.
\item The last one, Assumption \ref{ass:Kal18}(k), is verified by Assumption \ref{ass:Maao_newcondition}(ii) and (\ref{eq:conditionS_definiteness}), (\ref{eq:conditionS_delta}) as below
\begin{equation*}
\begin{split}
\frac{\mathcal{J} (x^\dagger, \hat u^\dagger; y^\delta) - \mathcal{J} (x^\dagger, \hat u^\dagger; y)}{\alpha_j (\delta, y^\delta)} &= \frac{\mathcal{Q} (x^\dagger, \hat u^\dagger; y^\delta) - \mathcal{Q} (x^\dagger, \hat u^\dagger; y)}{\alpha_j (\delta, y^\delta)}  \\
& \le \frac{\gamma(\mathcal{S} (y, y^\delta))}{\alpha_j (\delta, y^\delta)} \le \frac{\gamma(\delta)}{\alpha_j (\delta, y^\delta)} \le c,
\end{split}
\end{equation*}
and
\begin{equation*}
\begin{split}
\frac{ \mathcal{J} (x^\dagger, \hat u^\dagger; y) - \mathcal{J} (x^\delta, \hat u^\delta; y^\delta)}{\alpha_j (\delta, y^\delta)} & = \frac{0 - \mathcal{Q} (x^\delta, \hat u^\delta; y^\delta)}{\alpha_j (\delta, y^\delta)} \le 0.
\end{split}
\end{equation*}
\end{itemize}
\end{proof}

%In the applications on the following sections, the operator $P$ satisfies
%\begin{equation} \label{eq:condition_P}
%P(x,u) = L(x,u) + f(x),
%\end{equation}
%where $L(x,\cdot)$ is a continuous linear operator and $f$ is continuous. Additionally, we will also choose $\mathcal{Q}_P$, $\mathcal{S}$ in the form of
%\begin{equation} \label{eq:choose_Q}
%\mathcal{Q}_P (x, \hat u; y) = \frac{1}{2} \| P(x, \hat u + C^\ri(y)) \|_W^2,
%\end{equation}
%\begin{equation} \label{eq:choose_S}
%\mathcal{S} (y,z) = \| z - y \|_Y,
%\end{equation}
%where $W$ is a Hilbert space. With these choices, we get the following theorem.
%\begin{theorem} \label{the:check_newcondition}
%Let $C^\ri$ be continous and (\ref{eq:condition_P}), (\ref{eq:choose_Q}), (\ref{eq:choose_S}) be fulfill. If the topology $\mathcal{T}$ is chosen such that
%$$...$$
%Then the Assumptions \ref{ass:Maao}, \ref{ass:Maao_newcondition} and the conditions (\ref{eq:conditionQ}), (\ref{eq:conditionS_definiteness}), (\ref{eq:conditionS_delta}), (\ref{eq:conditionS_delta2}) is satisfied. Thus the conclusions in Theorem \ref{the:main_theorem} hold.
%\end{theorem}
%
%\begin{proof}
%\begin{itemize}
%\item ...
%%\item The Assumption \ref{ass:Maao} (v) is confirmed by the continuity of the map $\zeta \mapsto \| z - (\zeta - CC^\ri (\zeta)) \|$ and the inequality
%%\begin{equation*}
%%\big| \| z - (\zeta - CC^\ri (\zeta)) \| - \|z - (\zeta_0 - CC^\ri (\zeta_0)) \| \big| \le \| (\zeta_0 - CC^\ri (\zeta_0)) - (\zeta - CC^\ri (\zeta)) \|.
%%\end{equation*}
%%\item 2
%\end{itemize}
%\end{proof}

\section{Three application examples}\label{sec:ex}
Following up on a few briefly sketched examples in \cite{Kal18}, we here provide further evidence of the usefulness of minimization based formulation and regularization to real world problems.

To enable easy applicability of iterative minimization methods, we aim at working in Hilbert spaces $X$, $V$ for the design variables $x,u$. Also differentiability of $\mathcal{J}^\delta$ is an asset in this sense and holds for the examples below. On the constraints side, it is pointwise bounds that on one hand can be very efficiently implemented see, e.g., \cite{comp_minIP}, on the other hand are practically relevant in view of known a prior bounds on the searched for quantities. 
Morover, in the spirit of the Kohn-Vogelius functional, we strive for first order least squares formulations of the PDE models.
Another common feature of these examples and actually very characteristic in many real world applications is finite dimensionality of the data space, which enables application of the data inversion strategy from Section \ref{sec:Preliminaries}.
While in the examples of Sections \ref{sec:CEM-EIT}, \ref{sec:MPP}, the same PDE model comes with several different excitations, each of which leading to one of $I$ measurements (or measurement sets), there is only a single experiment carried out in the example of Section \ref{sec:SoundSources} and the measurement vector consists of observations at $L$ spatial points.

%!TeX root=Topic1.tex
\subsection{Electrical impedance tomography with the complete electrode model} \label{sec:CEM-EIT}

Electrical impedance tomography (EIT) is a meanwhile well-established and well-researched imaging technology (see, e.g., the review \cite{Borcea2002} and the references therein), which seeks to recover the spatially varying electrical conductivity in the interior of an inhomogeneous object by means of low-frequency voltage-current measurements on its surface.
A minimization based formulation of this problem has already been devised in \cite{Knowles1998,KohnVogelius87,KM90} for the idealized measurement model that considers both voltages and currents as arbitrary (up to smoothness assumptions) functions on the boundary. A more realistic model of the electrodes has been developed in \cite{SCI92}, see also \cite{JXZ16}. Our aim here is to extend the variational formulation from \cite{Knowles1998,KohnVogelius87,KM90} to incorporate the complete electrode model (CEM) from \cite{SCI92,JXZ16}.

\subsubsection{The minimization form of the problem}

Let $\Omega$ be a domain in $\mathbb{R}^2$ with a boundary $\partial \Omega$ which is a closed and simple curve. Electrodes are placed on $\partial \Omega$ and are numbered in counterclockwise order from $e_1$ to $e_L$ with $\overline{e_{\ell_1}} \cap \overline{e_{\ell_2}} = \emptyset$ if $\ell_1 \ne \ell_2$. The starting point and the end point of $\ell$th electrode are denoted by $e_\ell^a$ and $e_\ell^b$ and the gap between $e_\ell$ and $e_{\ell+1}$ is denoted by $g_\ell$ (see Figure \ref{pic:CEM}). For convenience, we use the identities $e_{L+1} \equiv e_1$, $e_0 \equiv e_L$, $g_{L+1} \equiv g_1$, $g_0 \equiv g_L$. We also denote by $j_{i,\ell}$ the current applied on $e_\ell$ and by $v_{i,\ell}$ the voltage at the $\ell$th electrode  at the $i$th measurement, $i\in\{1,\ldots,I\}$. By the law of charge conservation $j_i=(j_{i,1}, \dots, j_{i,L})$ is an element of
$$\mathbb{R}^L_{\diamond}:= \left\{ (x_1,\dots,x_L) \in \mathbb{R}^L: \sum_{\ell=1}^{L} x_\ell =0 \right\}.$$
In addition, we can also normalize $v_i=(v_{i,1},\dots,v_{i,L})$ such that $v_i \in \mathbb{R}^L_{\diamond}$, $i\in\{1,\ldots,I\}$.

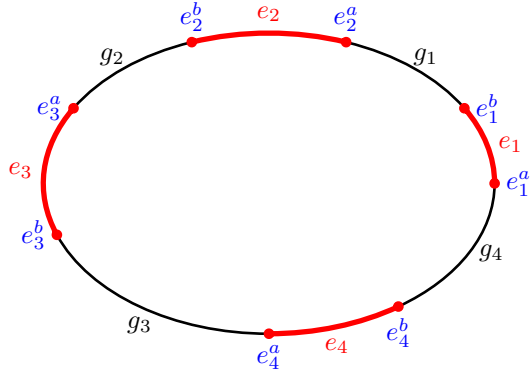
\begin{figure}[!h]
\begin{center}
\begin{tikzpicture} 
\draw (0,0) circle (3 and 2);
\coordinate (P) at ($(0, 0) + (30:3cm and 2cm)$);
\draw[black, line width=1pt] (P) arc (30:70:3cm and 2cm) node[pos=0.4, above]{$g_1$};
\coordinate (P) at ($(0, 0) + (110:3cm and 2cm)$);
\draw[black, line width=1pt] (P) arc (110:150:3cm and 2cm) node[pos=0.6, above]{$g_2$};
\coordinate (P) at ($(0, 0) + (200:3cm and 2cm)$);
\draw[black, line width=1pt] (P) arc (200:270:3cm and 2cm) node[pos=0.5, below]{$g_3$};
\coordinate (P) at ($(0, 0) + (305:3cm and 2cm)$);
\draw[black, line width=1pt] (P) arc (305:360:3cm and 2cm) node[pos=0.5, right]{$g_4$};

\draw[red, line width=2pt] 
(3,0) circle(1pt) node[blue, right] {$e_1^a$} arc (0:30:3cm and 2cm) circle(1pt) node[blue, right] {$e_1^b$} node[pos=0.5, right]{$e_1$};
\coordinate (P) at ($(0, 0) + (70:3cm and 2cm)$);
\draw[red, line width=2pt]
(P) circle (1pt) node[above, blue] {$e_2^a$} arc (70:110:3cm and 2cm) circle (1pt) node[above, blue] {$e_2^b$} node[pos=0.5, above]{$e_2$};
\coordinate (P) at ($(0, 0) + (150:3cm and 2cm)$);
\draw[red, line width=2pt]
(P) circle (1pt) node[left, blue] {$e_3^a$} arc (150:200:3cm and 2cm) circle (1pt) node[left, blue] {$e_3^b$} node[pos=0.5, left]{$e_3$};
\coordinate (P) at ($(0, 0) + (270:3cm and 2cm)$);
\draw[red, line width=2pt]
(P) circle (1pt) node[below, blue] {$e_4^a$} arc (270:305:3cm and 2cm) circle (1pt) node[below, blue] {$e_4^b$} node[pos=0.5, below]{$e_4$};
\end{tikzpicture}
\end{center}
\caption{Electrodes (in red) on the boundary with $L=4$. \label{pic:CEM}}
\end{figure}

The EIT-CEM problem is to find the conductivity $\sigma: \Omega \to \mathbb{R}$, $0 < \underline{\sigma} \le \sigma \le \overline{\sigma}, \text{\; a.e. in \;} \Omega$, satisfying
$$ \nabla \cdot J_i = 0, \quad \nabla^\bot \cdot E_i = 0, \quad J_i = \sigma E_i, \quad \text{in\;} \Omega,  \quad i=1,2,\dots, I$$
where $J_i:\Omega \to \mathbb{R}^2$ is the current density, $E_i:\Omega \to \mathbb{R}^2$ is the electric field and $\nabla^\bot$ is the 2-d rotation operator $\nabla^\bot = \left( - \frac{\partial}{\partial x_2},\frac{\partial}{\partial x_1} \right)$. In a similar way to \cite{Kal18,KM90,JXZ16}, by using potentials $\phi_i$ and $\psi_i$ for $J_i$ and $E_i$,
$$J_i = - \nabla^\bot \psi_i, \quad E_i = -\nabla \phi_i,$$
we write the problem in the form
\begin{subequations} \label{eq:EIT_CEM}
\begin{alignat}{2}
& \sqrt{\sigma}\nabla \phi_i - \frac{1}{\sqrt{\sigma}} \nabla^\bot \psi_i = 0 &&\quad \text{in \;} \Omega, \label{eq:EIT_CEM_eq} \\
& \phi_i + z_\ell \nabla^\bot \psi_i \cdot \nu = v_{i,\ell} &&\quad \text{on \;} e_\ell, \ \ell=1,2,\dots,L, \label{eq:EIT_CEM_constrain_el1} \\
& \int_{e_\ell} \nabla^\bot \psi_i \cdot \nu \dd s = j_{i,\ell} &&\quad \text{for \;} \ell=1,2,\dots,L, \label{eq:EIT_CEM_constrain_el2} \\
& \nabla^\bot \psi_i \cdot \nu = 0 &&\quad \text{on \;} \partial \Omega \backslash \cup_{\ell=1}^L e_\ell, \label{eq:EIT_CEM_constrain_outside_el}
\end{alignat}
\end{subequations}
for  $i= 1, 2, \dots, I$, where $\{z_\ell\}_{\ell=1}^{L}$ is the set of (known) positive contact impedances.
Here $I$ is the number of current patterns impressed via the $L$ electrodes.

Note that equations (\ref{eq:EIT_CEM}) contain only $\nabla^\bot \psi_i$, not $\psi_i$ itself. So adding or subtracting a constant from $\psi_i$ will have no effect on the problem and we can assume $\psi_i(e_1^a) = 0$.

By using (\ref{eq:EIT_CEM_constrain_el2}) and (\ref{eq:EIT_CEM_constrain_outside_el}) we see that $\psi_i(e_\ell^b) - \psi_i(e_\ell^a) = -j_{i,\ell}$ for $\ell \in\{1,\dots,L\}$, and $\psi_i(x) - \psi_i(e_{\ell}^b)=0$ for $x \in g_\ell,  \ell \in \{1,\dots,L\}$. It follows that $\psi_i(e_{\ell+1}^a) = \psi_i(e_{\ell}^b) = \psi_i(e_{\ell}^a) - j_{i,\ell}$ for $\ell \in \{1,\dots,L\}$ and then $\psi_i(e_{\ell+1}^a) = \psi_i(e_{\ell}^b) = \psi_i(e_1^a) - \sum_{k=1}^{l} j_{i,k} = -\sum_{k=1}^{l} j_{i,k}$  for $\ell \in \{1,\dots,L\}$. Therefore, the value of $\psi_i$ outside the electrodes is determined by $\psi_i(x) = -\sum_{k=1}^{l} j_{i,k}$ for $x \in g_\ell, \ell \in \{1,\dots,L\}$ or 
\begin{equation} \label{eq:condition_gl}
\psi_i|_{g_{\ell}} = -\sum_{k=1}^{l} j_{i,k}, \quad \mbox{ for all } \ell \in \{1,\dots,L\}.
\end{equation}
Next, by taking the integral from $e_\ell^a$ to $x$ on $e_\ell$ on both sides of (\ref{eq:EIT_CEM_constrain_el1}), noting that $\psi_i(e_\ell^a) = \psi_i|_{g_{\ell-1}}$, we get
\begin{equation} \label{eq:condition_el}
\int_{e_\ell^a}^x \phi_i \dd s - z_\ell (\psi_i(x) - \psi_i|_{g_{\ell-1}}) = v_{i,\ell} d_{\partial \Omega} (e_\ell^a, x), \quad \mbox{ for all }x \in e_\ell, \ \ell \in \{1,\dots,L\}
\end{equation}
where $d_{\partial \Omega} (x_1, x_2)$ is the length of $\partial \Omega$ from $x_1$ to $x_2$.

In view of (\ref{eq:condition_gl}), (\ref{eq:condition_el}), the function $\psi_i$ is constant on each $g_\ell$ and every function $C_\ell (\phi_i, \psi_i): e_\ell \to \mathbb{R}$,
\begin{equation} \label{eq:setting_Cl}
C_\ell(\phi_i, \psi_i)(x) := \frac{1}{d_{\partial \Omega} (e_\ell^a, x)} \left( \int_{e_\ell^a}^x \phi_i \dd s - z_\ell (\psi_i(x) - \psi_i|_{g_{\ell-1}}) \right), \quad \mbox{ for all } x \in e_\ell,
\end{equation}
is also constant on each $e_\ell$, so we can choose the spaces containing $\sigma$, $\overrightarrow{\phi} = (\phi_1, \dots, \phi_I)$, $ \overrightarrow{\psi} = (\psi_1, \dots, \psi_I)$ as 
\begin{equation} \label{eq:setting_xu}
\begin{split}
& \sigma \in X := L^2(\Omega), \\
& (\overrightarrow{\phi}, \overrightarrow{\psi}) \in V := \Big\{ (\overrightarrow{\phi}, \overrightarrow{\psi}) \in H^1 (\Omega)^{2I}: \forall i \in \{1, \dots, I\}, \; \forall \ell \in \{1, \dots, L\}, \\
& \qquad \qquad \qquad \qquad \psi_i (e_1^a) = 0, \qquad \psi_i|_{g_\ell}, C_\ell(\phi_i, \psi_i) \text{\; are constant,} \\
& \qquad \qquad \qquad \qquad \text{and \quad } \sum_{\ell=1}^{L} C_\ell (\phi_i, \psi_i) = 0\Big\}.
\end{split}
\end{equation}
We define the observation operator $C: V \to Y$ by
\begin{equation} \label{eq:setting_C}
C (\overrightarrow{\phi}, \overrightarrow{\psi}) = \left( \Big( \psi_i|_{g_{\ell-1}} - \psi_i|_{g_\ell} \Big)_{\ell=1}^L, \Big( C_\ell(\phi_i, \psi_i) \Big)_{\ell=1}^L \right)_{i=1}^I,
\end{equation}
where 
\begin{equation}
Y := \Big( \mathbb{R}_{\diamond}^L \times \mathbb{R}_{\diamond}^L \Big)^{I}
\end{equation} \label{eq:setting_Y}
and obviously $E (\sigma, \overrightarrow{\phi}, \overrightarrow{\psi}) = 
\Bigl(E_i(\sigma,\phi_i,\psi_i)\Bigr)_{i=1}^I \in L^2 (\Omega)^{2I}$ with 
\begin{equation} \label{eq:setting_P}
E_i(\sigma,\phi_i,\psi_i)=\sqrt{\sigma}\nabla \phi_i - \frac{1}{\sqrt{\sigma}} \nabla^\bot \psi_i .
\end{equation}

As a next step, we will determine the operator $C^\ri: Y \to V$ such that $C(C^\ri (y)) = y,$  for all 
\begin{equation}\label{yetaxi}
y = \left( (\eta_{i,\ell})_{\ell=1}^L, (\xi_{i,\ell})_{\ell=1}^L \right)_{i=1}^I \in Y, 
\end{equation}
that is, 
\begin{equation} \label{eq:formof_Cri}
C^\ri (y) = (C^\ri_{\overrightarrow{\phi}} (y), C^\ri_{\overrightarrow{\psi}} (y)) = (C^\ri_{\phi,1} (y), \dots, C^\ri_{\phi,I} (y), C^\ri_{\psi,1} (y), \dots, C^\ri_{\psi,I} (y))
\end{equation}
satisfying (\ref{eq:condition_gl}), (\ref{eq:condition_el}) where $(C^\ri_{\phi,i} (y), C^\ri_{\psi,i} (y))$ replaces $(\phi_i, \psi_i)$ and $(\eta_{i,\ell}, \xi_{i,\ell})$ replaces $(j_{i,\ell}, v_{i,\ell})$. Because of $C^\ri_{\psi,i} (y)|_{g_\ell} = -\sum_{k=1}^{l} \eta_{i,k}$, we choose $C^\ri_{\psi,i} (y)$ in the form
\begin{equation} \label{eq:formof_psi0}
C^\ri_{\psi,i} (y) = \sum_{\ell=1}^L \left( -\sum_{k=1}^{l} \eta_{i,k} \right) \psi_{0,\ell}
\end{equation} 
where $\psi_{0,\ell}(x) = 1$ if $x \in g_\ell$ and $\psi_{0,\ell} (x) = 0$ if $x \in g_k, k \ne l$, particularly, we choose $\psi_{0,\ell} \in H^1(\Omega)$ such that it satisfies the boundary conditions
\begin{equation} \label{eq:formof_psi0l}
\psi_{0,\ell} (x) = \left\{ \begin{array}{ll}
1, & \text{if \;} x \in g_\ell, \\
1 - d_{\partial \Omega} (e^b_\ell,x)/|e_\ell|, & \text{if \;} x \in e_\ell, \\
1 - d_{\partial \Omega} (e^a_{\ell+1},x)/|e_{\ell+1}|, & \text{if \;} x \in e_{\ell+1}, \\
0, & \text{if \;} x \in \partial \Omega \backslash (g_\ell \cup e_\ell \cup e_{\ell+1}),
\end{array} \right.
\end{equation}
where $|e_\ell|$  is the length of $e_\ell$. This is possible due to the fact that the function on the right hand side of (\ref{eq:formof_psi0l}) is in $H^{1/2} (\partial \Omega)$ together with the (inverse) Trace Theorem and the assumption that $\partial \Omega$ is Lipschitz. By some direct calculations on $C^\ri_{\psi,i} (y)$, it is easy to see that (recalling \eqref{yetaxi})
\begin{equation*}
\begin{split}
C^\ri_{\psi,i} (y) (x) &= -\sum_{k=1}^{l-1} \eta_{i,k} - \frac{\eta_{i,\ell}}{|e_\ell|} d_{\partial \Omega} (e^a_\ell, x) \\
&= C^\ri_{\psi,i} (y)|_{g_{i,l-1}} - \frac{\eta_{i,\ell}}{|e_\ell|} d_{\partial \Omega} (e^a_\ell, x), \quad \mbox{ for all } x \in e_\ell
\end{split}
\end{equation*}
and we get (by using (\ref{eq:condition_el}))
\begin{equation*}
\begin{split}
\int_{e_\ell^a}^x C^\ri_{\phi,i} (y) \dd s - z_\ell \left( C^\ri_{\psi,i} (y) (x) - C^\ri_{\psi,i} (y)|_{g_{\ell-1}} \right) &= \xi_{i,\ell} d_{\partial \Omega} (e_\ell^a, x), \quad \mbox{ for all } x \in e_\ell, \ \ell \in \{1,\dots,L\}
\end{split}
\end{equation*}
thus
\begin{equation*}
\begin{split}
\int_{e_\ell^a}^x C^\ri_{\phi,i} (y) \dd s &= \left( \xi_{i,\ell} - \frac{z_\ell}{|e_\ell|} \eta_{i,\ell} \right) d_{\partial \Omega} (e_\ell^a, x) \\
& = \left( \xi_{i,\ell} - \frac{z_\ell}{|e_\ell|} \eta_{i,\ell} \right) \int_{e_\ell^a}^x \dd s, \quad \mbox{ for all } x \in e_\ell, \ \ell \in \{1,\dots,L\}.
\end{split}
\end{equation*}
So $C^\ri_{\phi,i} (y)$ for $y$ according to \eqref{yetaxi} is defined as
\begin{equation} \label{eq:formof_phi0}
C^\ri_{\phi,i} (y) = \sum_{\ell=1}^L \left( \xi_{i,\ell} - \frac{z_\ell}{|e_\ell|} \eta_{i,\ell} \right) \phi_{0,\ell}
\end{equation}
where $\phi_{0,\ell}(x) = 1$ if $x \in e_\ell$ and $\phi_{0,\ell} (x) = 0$ if $x \in e_k, k \ne l$, particularly, we choose $\phi_{0,\ell} \in H^1(\Omega)$ satisfying the boundary conditions
\begin{equation} \label{eq:formof_phi0l}
\phi_{0,\ell} (x) = \left\{ \begin{array}{ll}
1, & \text{if \;} x \in e_\ell, \\
1 - d_{\partial \Omega} (e_\ell^b, x)/|g_\ell|, & \text{if \;} x \in g_\ell, \\
1 - d_{\partial \Omega} (e_\ell^a, x)/|g_{\ell-1}|, & \text{if \;} x \in g_{\ell-1}, \\
0, & \text{if \;} x \in \partial \Omega \backslash (e_\ell \cup g_\ell \cup g_{\ell-1}),
\end{array} \right.
\end{equation}
where $|g_\ell|$ is the length of $g_\ell$. This choice is also possible due to the same reason as for the existence of $\psi_{0,\ell}$ in (\ref{eq:formof_psi0l}). The so defined operator $C^\ri$ is linear and continuous as a mapping $C^\ri:Y\to V$.

\medskip

Now, the problem (\ref{eq:EIT_CEM}) is rewritten in the form (\ref{eq:model_observation_mixed}) as
\begin{equation} \label{eq:model_observation_mixed_EIT_CEM}
\left\{
\begin{array}{l}
\sqrt{\sigma} \nabla (\overrightarrow{\hat\phi} + C^\ri_{\overrightarrow{\phi}} (y)) - \frac{1}{\sqrt{\sigma}} \nabla^\bot (\overrightarrow{\hat\psi} + C^\ri_{\overrightarrow{\psi}} (y)) = 0, \\
(\sigma, \overrightarrow{\hat\phi}, \overrightarrow{\hat\psi}) \in L^\infty (\Omega) \times \Ker(C), \quad \mbox{ for all } i = 1,2,\dots,I,
\end{array}
\right.
\end{equation}
where $(C^\ri_{\overrightarrow{\phi}} (y), C^\ri_{\overrightarrow{\psi}} (y))$ satisfy (\ref{eq:formof_psi0}), (\ref{eq:formof_psi0l}), (\ref{eq:formof_phi0}), (\ref{eq:formof_phi0l}) and 
\begin{equation} \label{eq:KerC}
\begin{split}
\Ker(C) &= \big\{ (\overrightarrow{\hat\phi}, \overrightarrow{\hat\psi}) = (\hat \phi_1, \dots, \hat \phi_I, \hat \psi_1, \dots, \hat \psi_I,) \in V: \hat \psi_i|_{g_{\ell-1}} - \hat \psi_i|_{g_\ell} = 0, \\
&\qquad \qquad \text{\; and \;} C_\ell (\hat \phi_i, \hat \psi_i) = 0, \quad \forall \ell \in \{1, \dots, L\}, \forall i \in \{1, \dots, I\} \big\}.
\end{split}
\end{equation}

To regularize this problem using the theory from Section \ref{sec:Preliminaries}, we consider the cost function $\mathcal{J} (\sigma, \overrightarrow{\hat\phi}, \overrightarrow{\hat\psi}; y) = \mathcal{Q}_E (\sigma, \overrightarrow{\hat\phi}, \overrightarrow{\hat\psi}; y)$,
\[
\mathcal{Q}_E (\sigma, \overrightarrow{\hat\phi}, \overrightarrow{\hat\psi}; y) = 
\sum_{i=1}^I \mathcal{Q}_i(\sigma,\hat\phi_i, \hat\psi_i;y)
\]
where
\begin{equation} \label{eq:Q_P}
\begin{split}
&\mathcal{Q}_i(\sigma,\hat\phi_i, \hat\psi_i;y):=\begin{cases}\frac12\|q_i\|_{L^2(\Omega)}^2 \mbox{ if } q_i\in L^2(\Omega)\\
+\infty\mbox{ else}
\end{cases}\\
\mbox{with }
&q_i:=
E_i(\sigma,\hat\phi_i + C^\ri_{\phi,i} (y), \hat\psi_i + C^\ri_{\psi,i} (y))\\
&\hspace*{0.5cm}=
\sqrt{\sigma} \nabla (\hat \phi_i + C^\ri_{\phi,i} (y)) - \frac{1}{\sqrt{\sigma}} \nabla^\bot (\hat \psi_i + C^\ri_{\psi,i} (y))
%\mathcal{Q}_i(\sigma,\hat\phi_i, \hat\psi_i;y)&=
%\frac12\left\| E_i(\sigma,\hat\phi_i + C^\ri_{\phi,i} (y), \hat\psi_i + C^\ri_{\psi,i} (y)) \right\|_{L^2(\Omega)^{2I}}^{2} \\
%&= \frac{1}{2} \int_\Omega \left| \sqrt{\sigma} \nabla (\hat \phi_i + C^\ri_{\phi,i} (y)) - \frac{1}{\sqrt{\sigma}} \nabla^\bot (\hat \psi_i + C^\ri_{\psi,i} (y)) \right|^2 \dd x,
\end{split}
\end{equation}
the regularization function
\begin{equation} \label{eq:R}
\mathcal{R} (\overrightarrow{\hat\phi}, \overrightarrow{\hat\psi}) = \frac{1}{2} \| (\overrightarrow{\hat\phi}, \overrightarrow{\hat\psi}) \|^2_{H^{s} (\Omega)^{2I}},
\end{equation}
for some $s>1$ the discrepancy measure
\begin{equation} \label{eq:S}
\mathcal{S} (y_1, y_2) = \| y_2 -y_1 \|_{\infty,\mathbb{R}^{2LI}}=\max\{|y_{2,j}-y_{1,j}|\, : \, j\in\{1,\ldots,2LI\},
\end{equation}
the function $\widetilde{\mathcal{R}}: L^2(\Omega) \to \overline{\mathbb{R}}$,
\begin{equation} \label{eq:R_tilde}
\widetilde{\mathcal{R}} (\sigma) = \left\| \sigma - \frac{\underline{\sigma} + \overline{\sigma}}{2} \right\|_{L^\infty (\Omega)}
\end{equation}
and the constant $\rho =\frac{\overline{\sigma} - \underline{\sigma}}{2}$. 
Note that therewith the constraint $\widetilde{\mathcal{R}}(\sigma) \le \rho$ is equivalent to $\underline{\sigma} \le \sigma \le \overline{\sigma}$ a.e. in $\Omega$.
The regularization term according to \eqref{eq:R} is only needed in order to prove existence of a minimizer. In particular, it allows to establish $\mathcal{T}$ lower semicontinuity of the regularized cost function with an appropriate topology $\mathcal{T}$ which would not be the case without this term.

If the accuracy of the current and voltage measurements is $\delta^{j}$ and $\delta^v$ respectively, i.e., $|j_{i,\ell}^\delta - j_{i,\ell}| \le \delta^j$ and $|v_{i,\ell}^\delta - v_{i,\ell}| \le \delta^v$ then the discrepancy measure between the perturbed data $y^\delta = \left( (j_{i,\ell}^\delta)_{\ell=1}^L, (v_{i,\ell}^\delta)_{\ell=1}^L \right)_{i=1}^I$ and the exact data $y = \left( (j_{i,\ell})_{\ell=1}^L, (v_{i,\ell})_{\ell=1}^L \right)_{i=1}^I$ is 
\begin{equation}\label{deltaEIT}
\mathcal{S} (y, y^\delta) = \| y^\delta - y \|_{\infty,\mathbb{R}^{2LI}} \le \max \{\delta^j, \delta^v\} =: \delta,
\end{equation}
where $\| \cdot \|_{\infty,\mathbb{R}^{2LI}}$ is the maximum norm and regularized minimization can be summarized as
 \begin{equation} \label{eq:minIP_CEM-EIT_noise}
\begin{split}
&\begin{split}\min \Bigg\{ \sum_{i=1}^I \frac{1}{2} \int_\Omega \left| \sqrt{\sigma} \nabla (\hat \phi_i + C^\ri_{\phi,i} (y^\delta)) - \frac{1}{\sqrt{\sigma}} \nabla^\bot (\hat \psi_i + C^\ri_{\psi,i} (y^\delta)) \right|^2 \dd x \\ + \frac{\alpha}{2} \| (\overrightarrow{\hat\phi}, \overrightarrow{\hat\psi}) \|^2_{H^{s} (\Omega) ^{2I}}:\end{split} \\
& \qquad (\sigma, \overrightarrow{\hat\phi}, \overrightarrow{\hat\psi}) \in \{ X \times V: \| C(\overrightarrow{\hat\phi}, \overrightarrow{\hat\psi}) \|_{\mathbb{R}^{2LI}} \le \tau \delta, \; \underline{\sigma} \le \sigma \le \overline{\sigma}, \text{\; a.e in \;} \Omega \} \Bigg\},
\end{split}
\end{equation}
where $X,V$ are defined in (\ref{eq:setting_xu}), $C$ in (\ref{eq:setting_C}), and $C^\ri$ in \eqref{eq:formof_psi0}--\eqref{eq:formof_phi0l}.

\begin{remark} \label{rem:CEM-EIT_choosing_M_ad^delta}
Because of $\Ima(C) \equiv Y$, the admissible sets in (\ref{eq:M_ad^delta}) and (\ref{eq:M_ad^delta_reduced}) are equal.
\end{remark}

\subsubsection{Convergence}
To obtain a convergence result, we define the topology $\mathcal{T}$ on $X \times V$ by
 \begin{equation} \label{eq:topologyT}
 (\sigma_n, \overrightarrow{\hat\phi}_n, \overrightarrow{\hat\psi}_n) \xrightarrow{\mathcal{T}} (\sigma, \overrightarrow{\hat\phi}, \overrightarrow{\hat\psi}) \Leftrightarrow 
\left \{
\begin{split}
\sigma_n \xrightharpoonup{*} \sigma \text{\; and \;} \frac{1}{\sigma_n} \xrightharpoonup{*} \frac{1}{\sigma} & \text{\; in \;} L^\infty (\Omega), \\
(\overrightarrow{\hat\phi}_n, \overrightarrow{\hat\psi}_n) \to (\overrightarrow{\hat\phi}, \overrightarrow{\hat\psi}) & \text{\; in \;} H^1 (\Omega)^{2I}, \\
(\overrightarrow{\hat\phi}_n, \overrightarrow{\hat\psi}_n) \rightharpoonup (\overrightarrow{\hat\phi}, \overrightarrow{\hat\psi}) & \text{\; in \;} H^{s} (\Omega)^{2I},
\end{split}
\right.
\end{equation}
for $s>1$ as in \eqref{eq:R} and the norm $\|\cdot\|_B$ is given by
\begin{equation} \label{eq:normB}
\| (\sigma, \overrightarrow{\hat\phi}, \overrightarrow{\hat\psi}) \|_B := \| \sigma \|_{L^\infty (\Omega)} + \| (\overrightarrow{\hat\phi}, \overrightarrow{\hat\psi}) \|_{H^{s} (\Omega)^{2I}} + \| C(\overrightarrow{\hat\phi}, \overrightarrow{\hat\psi}) \|_{\mathbb{R}^{2LI}}.
 \end{equation}
In addition, we also impose some priori conditions on $\sigma^\dagger$ and $(\overrightarrow{\hat\phi}^\dagger, \overrightarrow{\hat\psi}^\dagger)$ as follows
\begin{equation} \label{eq:priori_information_sigma}
\underline{\sigma} \le \sigma^\dagger \le \overline{\sigma}, \text{\quad a.e. in \;} \Omega
\end{equation}
and
\begin{equation} \label{eq:priori_information_phi_psi}
(\overrightarrow{\hat\phi}^\dagger, \overrightarrow{\hat\psi}^\dagger) \in H^{s} (\Omega)^{2I}.
\end{equation}

\begin{corollary} \label{cor:CEM-EIT}
Let (\ref{eq:priori_information_sigma}), (\ref{eq:priori_information_phi_psi}) hold. Then
\begin{enumerate} [label = (\roman*)]
\item (Existence of minimizers) For any $\alpha>0$, a minimizer of (\ref{eq:minIP_CEM-EIT_noise}) exists;
\item (Boundedness) For any sequence $(y_n)_{n \in \mathbb{N}} \subset Y$ with $y_n \to y^\delta$ in $Y$, the sequence of corresponding regularized minimizers is $\|\cdot\|_B$ bounded for $\|\cdot\|_B$ as in (\ref{eq:normB});
\item (Convergence) If additionally the choice of regularization parameter satisfies
$$\alpha(\delta, y^\delta) \to 0 \text{\quad and \quad} \frac{\delta^2}{\alpha(\delta, y^\delta)} \le c_0, \text{\qquad as \;} \delta \to 0, $$
then as $\delta \to 0$ in \eqref{deltaEIT}, $y^\delta \to y$, the family of minimizers $\left( \sigma_{\alpha(\delta, y^\delta)}^\delta, \overrightarrow{\hat\phi}_{\alpha(\delta, y^\delta)}^\delta, \overrightarrow{\hat\psi}_{\alpha(\delta, y^\delta)}^\delta \right)$ converges $\mathcal{T}$ subsequentially to a solution $(\sigma^\dagger, \overrightarrow{\hat\phi}^\dagger, \overrightarrow{\hat\psi}^\dagger)$ of the inverse problem (\ref{eq:model_observation_mixed_EIT_CEM}) with exact data $y$. 
\end{enumerate}
\end{corollary}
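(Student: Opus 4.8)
The plan is to deduce Corollary~\ref{cor:CEM-EIT} directly from Theorem~\ref{the:main_theorem} by checking that, for the concrete choices (\ref{eq:Q_P})--(\ref{eq:R_tilde}) together with the topology (\ref{eq:topologyT}) and the norm (\ref{eq:normB}), Assumptions~\ref{ass:Maao} and~\ref{ass:Maao_newcondition} and the structural conditions (\ref{eq:conditionQ})--(\ref{eq:conditionS_delta2}) are satisfied (with $x=\sigma$, $\hat u=(\overrightarrow{\hat\phi},\overrightarrow{\hat\psi})$ and $m=1$). Several items are immediate: (\ref{eq:conditionQ}) holds since $\mathcal{Q}_i=\tfrac12\|q_i\|_{L^2}^2$ vanishes exactly when the residual $E_i$ does; (\ref{eq:conditionS_definiteness}) holds because $\mathcal{S}$ is a norm; (\ref{eq:conditionS_delta}) is precisely (\ref{deltaEIT}); and since $\Ima(C)\equiv Y$ (Remark~\ref{rem:CEM-EIT_choosing_M_ad^delta}) we have $CC^\ri=\id_Y$, whence $\mathcal{S}(CC^\ri(y^\delta),y^\delta)=0<\tau\delta$, giving (\ref{eq:conditionS_delta2}). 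Assumption~\ref{ass:Maao}(i) is the restatement of (\ref{eq:priori_information_sigma}) as $\widetilde{\mathcal{R}}(\sigma^\dagger)\le\rho$, and (ii) follows from (\ref{eq:priori_information_phi_psi}) with the single term $\mathcal{R}\ge0$. Finally, since the max norm is translation invariant and $CC^\ri=\id_Y$, Remark~\ref{rem:ass:Maao(v)} allows me to drop Assumption~\ref{ass:Maao}(v).

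For the lower semicontinuity in Assumption~\ref{ass:Maao}(iv) I would exploit that the topology (\ref{eq:topologyT}) is tailored to make $(\sigma,\overrightarrow{\hat\phi},\overrightarrow{\hat\psi})\mapsto\mathcal{Q}_E$ actually $\mathcal{T}$-continuous: writing $\|q_i\|_{L^2}^2=\int_\Omega\sigma|\nabla\Phi_i|^2-2\int_\Omega\nabla\Phi_i\cdot\nabla^\bot\Psi_i+\int_\Omega\tfrac1\sigma|\nabla^\bot\Psi_i|^2$ with $\Phi_i=\hat\phi_i+C^\ri_{\phi,i}(z)$ and $\Psi_i=\hat\psi_i+C^\ri_{\psi,i}(z)$, the cross term passes to the limit under strong $H^1$ convergence, while the first and third terms converge because $\sigma_n\xrightharpoonup{*}\sigma$ and $\tfrac1{\sigma_n}\xrightharpoonup{*}\tfrac1\sigma$ weak-$*$ in $L^\infty$ pair with the strongly $L^1$-convergent squared gradients. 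The discrepancy term reduces, via $CC^\ri=\id_Y$, to $\|C(\overrightarrow{\hat\phi},\overrightarrow{\hat\psi})\|_\infty$ and is continuous by continuity of the finite-rank $C$, while $\widetilde{\mathcal{R}}$ and $\mathcal{R}$ are weak-$*$, resp.\ weakly, l.s.c. For Assumption~\ref{ass:Maao_newcondition} I would invoke Remark~\ref{rem:Ass3}: here $D_i(\sigma,\hat\phi,\hat\psi)C^\ri z=\sqrt\sigma\nabla C^\ri_{\phi,i}(z)-\tfrac1{\sqrt\sigma}\nabla^\bot C^\ri_{\psi,i}(z)$ is linear in $z$ and $b_i=\sqrt\sigma\nabla\hat\phi_i-\tfrac1{\sqrt\sigma}\nabla^\bot\hat\psi_i$, so the required norm form holds with $W=L^2(\Omega)$. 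Crucially, evaluating at the exact solution gives $\mathcal{Q}_E(x^\dagger,\hat u^\dagger;y)=0$, so the bound of Remark~\ref{rem:Ass3} collapses from $\gamma(t)=c(1+t)t$ to $\gamma(t)\sim t^2$; this is exactly why the parameter choice in Corollary~\ref{cor:CEM-EIT}(iii) reads $\delta^2/\alpha\le c_0$.

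The main obstacle is Assumption~\ref{ass:Maao}(iii), the $\mathcal{T}$-compactness and $\|\cdot\|_B$-boundedness of $L_c$. The $(\overrightarrow{\hat\phi},\overrightarrow{\hat\psi})$-part is routine: $\mathcal{R}\le c$ bounds $(\overrightarrow{\hat\phi},\overrightarrow{\hat\psi})$ in $H^s(\Omega)^{2I}$, and since $s>1$ the embedding $H^s(\Omega)\hookrightarrow\hookrightarrow H^1(\Omega)$ is compact, yielding a subsequence that converges strongly in $H^1$ and weakly in $H^s$ --- precisely the last two lines of (\ref{eq:topologyT}) --- and, with the $L^\infty$-bound on $\sigma$, also $\|\cdot\|_B$-boundedness. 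The delicate part is $\sigma$: the constraint $\widetilde{\mathcal{R}}(\sigma)\le c$ bounds $\sigma_n$ and, since $\underline\sigma>0$, also $\tfrac1{\sigma_n}$ in $L^\infty$, so along a subsequence $\sigma_n\xrightharpoonup{*}\sigma$ and $\tfrac1{\sigma_n}\xrightharpoonup{*}g$; but the topology (\ref{eq:topologyT}) requires $g=\tfrac1\sigma$, which is \emph{not} automatic --- for an oscillating sequence convexity of $t\mapsto1/t$ forces $g\ge\tfrac1\sigma$ with strict inequality. Identifying the two weak-$*$ limits (equivalently, excluding microstructure within $L_c$) is where I expect the genuine work, and since there is no regularization acting on $\sigma$ here I would have to extract this consistency from the structure of the residual and the finite-dimensional observation rather than from any compactness of the conductivity class itself.

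Granting Assumption~\ref{ass:Maao}(iii), parts (i)--(iii) of the corollary then follow verbatim from parts (a)--(c) of Theorem~\ref{the:main_theorem} with $\gamma(\delta)=c\,\delta^2$, the $\mathcal{T}$-continuity of $\mathcal{Q}_E$ established above guaranteeing that the subsequential $\mathcal{T}$-limit $(\sigma^\dagger,\overrightarrow{\hat\phi}^\dagger,\overrightarrow{\hat\psi}^\dagger)$ solves (\ref{eq:model_observation_mixed_EIT_CEM}).
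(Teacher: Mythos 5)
Your strategy is the same as the paper's: verify the structural conditions (\ref{eq:conditionQ})--(\ref{eq:conditionS_delta2}) and Assumptions \ref{ass:Maao}, \ref{ass:Maao_newcondition}, then invoke Theorem \ref{the:main_theorem}; all the items you do verify are handled essentially as in the paper, including the use of Remark \ref{rem:Ass3} for Assumption \ref{ass:Maao_newcondition} (and your observation that $\mathcal{Q}_E(\sigma^\dagger,\hat u^\dagger;y)=0$ collapses $\gamma$ to a quadratic, which is what makes the parameter choice $\delta^2/\alpha\le c_0$ the right one, is correct and makes explicit something the paper leaves implicit). The gap you flag at Assumption \ref{ass:Maao}(iii) is, however, genuine, and you should know that the paper does not close it either: its proof extracts weak-$*$ convergent subsequences of $(\sigma_n)$ and of $(1/\sigma_n)$ \emph{separately} and then simply asserts that the subsequence is $\mathcal{T}$ convergent --- exactly the unjustified identification of the two limits that you point out. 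In fact, with $\mathcal{T}$ as written in (\ref{eq:topologyT}) the compactness claim is false: take $\hat\phi_i=\hat\psi_i=0$ and $\sigma_n=\underline{\sigma}+(\overline{\sigma}-\underline{\sigma})\chi_n$ with $\chi_n$ the indicator function of $\{x\in\Omega:\sin(nx_1)>0\}$. Then $(\sigma_n,0,0)\in L_c$ for all sufficiently large $c$ (the zero state is admissible, $\widetilde{\mathcal{R}}(\sigma_n)=\rho$, and $\mathcal{Q}_E(\sigma_n,0,0;z_1)$ is bounded uniformly in $n$ since $\underline{\sigma}\le\sigma_n\le\overline{\sigma}$), yet $\sigma_n\xrightharpoonup{*}\frac{\underline{\sigma}+\overline{\sigma}}{2}$ while $\frac{1}{\sigma_n}\xrightharpoonup{*}\frac12\left(\frac{1}{\underline{\sigma}}+\frac{1}{\overline{\sigma}}\right)>\frac{2}{\underline{\sigma}+\overline{\sigma}}$, so \emph{no} subsequence converges in the sense of (\ref{eq:topologyT}). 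This is the classical oscillation phenomenon behind the relaxation in \cite{KohnVogelius87}, and it also kills your hoped-for rescue of extracting the identification ``from the structure of the residual and the finite-dimensional observation'': the oscillating conductivities above are paired with zero states and hence satisfy every constraint entering $L_c$.

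The repair is not to identify the two weak-$*$ limits but to weaken the topology: delete the requirement $\frac{1}{\sigma_n}\xrightharpoonup{*}\frac{1}{\sigma}$ from (\ref{eq:topologyT}). Compactness of $L_c$ then follows from Banach--Alaoglu for $\sigma_n$ together with the compact embedding $H^s(\Omega)\hookrightarrow H^1(\Omega)$ for the states, exactly as in the unproblematic part of your argument. The only item that then needs an argument different from the paper's is the $\mathcal{T}$ lower semicontinuity of $\mathcal{Q}_E$ in Assumption \ref{ass:Maao}(iv), since the paper's continuity proof uses the discarded convergence of the reciprocals. Writing, in your notation, $\mathcal{Q}_i=\frac12\int_\Omega\sigma|\nabla\Phi_i|^2\dd x+\frac12\int_\Omega\frac{1}{\sigma}|\nabla^\bot\Psi_i|^2\dd x-\int_\Omega\nabla\Phi_i\cdot\nabla^\bot\Psi_i\dd x$, the first and last terms pass to the limit under $\sigma_n\xrightharpoonup{*}\sigma$ and strong $H^1$ convergence as before; for the middle term, split off $\int_\Omega\frac{1}{\sigma_n}\left(|\nabla^\bot\Psi_{n,i}|^2-|\nabla^\bot\Psi_i|^2\right)\dd x\to0$ and apply to the remainder the pointwise tangent inequality $\frac{1}{\sigma_n}\ge\frac{1}{\sigma}-\frac{\sigma_n-\sigma}{\sigma^2}$, which yields $\liminf_{n\to\infty}\int_\Omega\frac{b}{\sigma_n}\dd x\ge\int_\Omega\frac{b}{\sigma}\dd x$ for every nonnegative $b\in L^1(\Omega)$ because $\frac{b}{\sigma^2}\in L^1(\Omega)$ and $\sigma_n-\sigma\xrightharpoonup{*}0$. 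Lower semicontinuity, not continuity, is all Theorem \ref{the:main_theorem} requires, so with this weaker $\mathcal{T}$ --- and the correspondingly weaker meaning of convergence of the conductivity component in part (iii) --- both your argument and the paper's go through.
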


\begin{proof}
The proof proceeds by verification of Assumptions \ref{ass:Maao} and \ref{ass:Maao_newcondition}.
\begin{itemize}
\item Assumptions \ref{ass:Maao}(i),(ii) follow from (\ref{eq:priori_information_sigma}), (\ref{eq:priori_information_phi_psi}).
\item The set $L_c$ in Assumption \ref{ass:Maao}(iii) is clearly $\|\cdot\|_B$ bounded by definition of 
$\mathcal{R}$, $\widetilde{\mathcal{R}}$, $\mathcal{S}$.
Besides, for every sequence $(\sigma_n, \overrightarrow{\hat\phi}_n, \overrightarrow{\hat\psi}_n)$ in $L_c$,
\begin{itemize}
\item $(\overrightarrow{\hat\phi}_n, \overrightarrow{\hat\psi}_n)$, is bounded  and thus has a weakly convergent subsequence in $H^{s} (\Omega)^{2I}$, as well as a strongly convergent subsequence in $H^1 (\Omega)^{2I}$ by the compactness of the embedding operator from $H^{s} (\Omega)^{2I}$ to $H^1 (\Omega)^{2I}$;
\item the sequences $(\sigma_n)$, $\left(\frac{1}{\sigma_n}\right)$, which are bounded in $L^\infty (\Omega)$ due to boundedness of $\widetilde{\mathcal{R}} (\sigma_n)$, have weak-star convergent subsequences in $L^\infty (\Omega)$.
\end{itemize}
Thus we can extract a subsequence of $(\sigma_n, \overrightarrow{\hat\phi}_n, \overrightarrow{\hat\psi}_n)$ that is $\mathcal{T}$ convergent in $X \times V$. So Assumption \ref{ass:Maao}(iii) is verified.
\item The maps $\mathcal{R}$, $\widetilde{\mathcal{R}}$, $(\sigma, \overrightarrow{\hat\phi}, \overrightarrow{\hat\psi}) \mapsto \mathcal{S} (C(\overrightarrow{\hat\phi}, \overrightarrow{\hat\psi}), z)$ are obviously $\mathcal{T}$ lower semicontinuous by their definition. To verify Assumption \ref{ass:Maao}(iv), we only need to show $\mathcal{T}$ lower semicontinuity of $(\sigma, \overrightarrow{\hat\phi}, \overrightarrow{\hat\psi}) \mapsto \mathcal{Q}_E (\sigma, \overrightarrow{\hat\phi}, \overrightarrow{\hat\psi}; z)$ for all $z \in Y$. Indeed, we have
$\mathcal{Q}_E (\sigma, \overrightarrow{\hat\phi}, \overrightarrow{\hat\psi}; z) = \sum_{i=1}^I \mathcal{Q}_i (\sigma, \hat \phi_i, \hat \psi_i; z),$
where, cf. \eqref{eq:Q_P},
\begin{equation*}
\begin{split}
\mathcal{Q}_i (\sigma, \hat \phi_i, \hat \psi_i; z) 
%&= \frac{1}{2} \int_\Omega \left| \sqrt{\sigma} \nabla (\hat \phi_i + C^\ri_{\phi,i} (z)) - \frac{1}{\sqrt{\sigma}} \nabla^\bot (\hat \psi_i + C^\ri_{\psi,i} (z)) \right|^2 \dd x \\
&= \frac{1}{2} \int_\Omega \left( \sigma |\nabla \hat \phi_i|^2 + \frac{1}{\sigma} |\nabla^\bot \hat \psi_i|^2 - 2 \nabla \hat \phi_i \cdot \nabla^\bot \hat \psi_i \right) \dd x \\
& \quad + \int_\Omega \sigma \left( \nabla \hat \phi_i - \frac{1}{\sigma} \nabla^\bot \hat \psi_i \right) \cdot \left( \nabla C^\ri_{\phi,i} (z) - \frac{1}{\sigma} \nabla^\bot C^\ri_{\psi,i} (z) \right) \dd x \\
& \quad + \frac{1}{2} \int_\Omega \left( \sigma |\nabla C^\ri_{\phi,i} (z)|^2 + \frac{1}{\sigma} |\nabla^\bot C^\ri_{\psi,i} (z)|^2 - 2 \nabla C^\ri_{\phi,i} (z) \cdot \nabla^\bot C^\ri_{\psi,i} (z) \right) \dd x
\end{split}
\end{equation*}
and for all $(\sigma_n, \overrightarrow{\hat\phi}_n, \overrightarrow{\hat\psi}_n) \xrightarrow{\mathcal{T}} (\sigma, \overrightarrow{\hat\phi}, \overrightarrow{\hat\psi})$,
\begin{equation*}
\begin{split}
& |\mathcal{Q}_i (\sigma_n, \hat \phi_{n,i}, \hat \psi_{n,i}; z) - \mathcal{Q}_i (\sigma, \hat \phi_i, \hat \psi_i; z)| \\
& \quad \le \frac{1}{2} \left| \int_\Omega \left( \sigma_n |\nabla \hat \phi_{n,i}|^2 - \sigma |\nabla \hat \phi_i|^2 \right) \dd x \right| + \frac{1}{2} \left| \int_\Omega \left( \frac{1}{\sigma_n} |\nabla^\bot \hat \psi_{n,i}|^2 - \frac{1}{\sigma} |\nabla^\bot \hat \psi_i|^2 \right) \dd x \right| \\
& \quad \quad + \left| \int_\Omega \left( \nabla \hat \phi_{n,i} \cdot \nabla^\bot \hat \psi_{n,i} - \nabla \hat \phi_{i} \cdot \nabla^\bot \hat \psi_{i} \right) \dd x \right| \\
& \quad \quad + \left| \int_\Omega \left( \sigma_n \nabla \hat \phi_{n,i} - \sigma \nabla \hat \phi_i \right)  \cdot \left( \nabla C^\ri_{\phi,i} (z) - \frac{1}{\sigma} \nabla^\bot C^\ri_{\psi,i} (z) \right)\dd x \right| \\
& \quad \quad + \left| \int_\Omega \left( \frac{1}{\sigma_n} \nabla^\bot \hat \psi_{n,i} - \frac{1}{\sigma} \nabla^\bot \hat \psi_i \right)  \cdot \left( \nabla C^\ri_{\phi,i} (z) - \frac{1}{\sigma} \nabla^\bot C^\ri_{\psi,i} (z) \right)\dd x \right| \\
& \to 0,
\end{split}
\end{equation*}
because each term in the right hand side tends to $0$ as $n \to \infty$: \\
the first term
\begin{equation*}
\begin{split}
&\left| \int_\Omega \left( \sigma_n |\nabla \hat \phi_{n,i}|^2 - \sigma |\nabla \hat \phi_i|^2 \right) \dd x \right| \\
& \quad  = \left| \int_\Omega \sigma_n \left( | \nabla \hat \phi_{n,i}|^2 - |\nabla \hat \phi_i|^2 \right) \dd x + \int_\Omega (\sigma_n - \sigma) |\nabla \hat \phi_i|^2 \dd x \right| \\
& \quad \le \overline{\sigma} \int_\Omega \left| \left( \nabla \hat \phi_{n,i} + \nabla \hat \phi_i \right) \cdot \left( \nabla \hat \phi_{n,i} - \nabla \hat \phi_i \right) \right| \dd x + \left| \int_\Omega (\sigma_n - \sigma) |\nabla \hat \phi_i|^2 \dd x \right| \\
& \quad \le \overline{\sigma} \left\| \nabla \hat \phi_{n,i} + \nabla \hat \phi_i \right\|_{L^2 (\Omega)^2} \left\| \nabla \hat \phi_{n,i} - \nabla \hat \phi_i \right\|_{L^2 (\Omega)^2} + \left| \int_\Omega (\sigma_n - \sigma) |\nabla \hat \phi_i|^2 \dd x \right|
\end{split}
\end{equation*}
tends to $0$, since $\left\| \nabla \hat \phi_{n,i} + \nabla \hat \phi_i \right\|_{L^2 (\Omega)^2}$ is bounded, $\left\| \nabla \hat \phi_{n,i} - \nabla \hat \phi_i \right\|_{L^2 (\Omega)^2} \to 0$ as $n \to \infty$ and $(\sigma_n -\sigma) \xrightharpoonup{*} 0$ in $L^\infty (\Omega)$, $|\nabla \hat \phi_i|^2 \in L^1 (\Omega)$; 
note that at this point we need the strength of $\mathcal{T}$ inducing strong $H^1$ norm convergence of $\hat \phi_{n,i}$, as enabled by the regularization term \eqref{eq:R};\\
the second term can be estimated analogously;\\
the third term
\begin{equation*}
\begin{split}
& \left| \int_\Omega \left( \nabla \hat \phi_{n,i} \cdot \nabla^\bot \hat \psi_{n,i} - \nabla \hat \phi_{i} \cdot \nabla^\bot \hat \psi_{i} \right) \dd x \right| \\
& \quad \le \left| \int_\Omega \nabla \hat \phi_{n,i} \cdot \left( \nabla^\bot \hat \psi_{n,i} - \nabla^\bot \hat \psi_i \right) \dd x \right| + \left| \int_\Omega \left( \nabla \hat \phi_{n,i} - \nabla \hat \phi_i \right) \cdot \nabla^\bot \hat \psi_i \dd x \right| \\
& \quad \le \left\| \nabla \hat \phi_{n,i} \right\|_{L^2 (\Omega)^2} \left\| \nabla^\bot \hat \psi_{n,i} - \nabla^\bot \hat \psi_i \right\|_{L^2 (\Omega)^2} + \left\| \nabla \hat \phi_{n,i} - \nabla \hat \phi_i \right\|_{L^2 (\Omega)^2} \left\| \nabla^\bot \hat \psi_i \right\|_{L^2 (\Omega)^2}
\end{split}
\end{equation*}
tends to $0$, since $\nabla^\bot \hat \psi_{n,i} \to \nabla^\bot \hat \psi_i$ and $\nabla \hat \phi_{n,i} \to \nabla \hat \phi_i$ in $L^2 (\Omega)^2$; \\
the fourth term
\begin{equation*}
\begin{split}
& \left| \int_\Omega \left( \sigma_n \nabla \hat \phi_{n,i} - \sigma \nabla \hat \phi_i \right)  \cdot \left( \nabla C^\ri_{\phi,i} (z) - \frac{1}{\sigma} \nabla^\bot C^\ri_{\psi,i} (z) \right)\dd x \right| \\
& \quad \le \left| \int_\Omega \left( \left( \sigma_n - \sigma \right) \nabla \hat \phi_{n,i} + \sigma \left( \nabla \hat \phi_{n,i} - \nabla \hat \phi_i \right) \right)  \cdot \left( \nabla C^\ri_{\phi,i} (z) - \frac{1}{\sigma} \nabla^\bot C^\ri_{\psi,i} (z) \right)\dd x \right| \\
& \quad \le \left| \int_\Omega \left( \sigma_n - \sigma \right) \nabla \hat \phi_{n,i} \cdot \left( \nabla C^\ri_{\phi,i} (z) - \frac{1}{\sigma} \nabla^\bot C^\ri_{\psi,i} (z) \right) \dd x \right| \\
& \quad \qquad + \overline{\sigma} \left\| \nabla \hat \phi_{n,i} - \nabla \hat \phi_i \right\|_{L^2 (\Omega)^2} \left\| \nabla C^\ri_{\phi,i} (z) - \frac{1}{\sqrt{\sigma}} \nabla^\bot C^\ri_{\psi,i} (z) \right\|_{L^2 (\Omega)^2}
\end{split}
\end{equation*}
tends to $0$, since $(\sigma_n -\sigma) \xrightharpoonup{*} 0$ in $L^\infty (\Omega)$, $\nabla \hat \phi_{n,i} \cdot \left( \nabla C^\ri_{\phi,i} (z) - \frac{1}{\sigma} \nabla^\bot C^\ri_{\psi,i} (z) \right) \in L^1 (\Omega)$ and $\left\| \nabla \hat \phi_{n,i} - \nabla \hat \phi_i \right\|_{L^2 (\Omega)^2} \to 0$;\\
the fifth term again works analogously.
\item Assumption \ref{ass:Maao}(v) is verified by Remark \ref{rem:ass:Maao(v)}.
\item For Assumption \ref{ass:Maao_newcondition}, we employ Remark \ref{rem:Ass3} with $W=L^2(\Omega)$, $D_i(\sigma, \phi_i,\psi_i)C^\ri(\eta,\xi)=\sqrt{\sigma}\nabla C^\ri_{\phi,i} (\eta,\xi)- \frac{1}{\sqrt{\sigma}} \nabla^\bot C^\ri_{\psi,i} (\eta,\xi)$, cf. \eqref{eq:formof_psi0}, \eqref{eq:formof_phi0}, \eqref{eq:Q_P}, and the estimate 
\[
\begin{split}
&\|D_i(\sigma, \phi_i,\psi_i)C^\ri(\eta,\xi)\|_{L^2(\Omega)}\\
&\leq
\sqrt{\overline{\sigma}}\sum_{\ell=1}^L\left(|\xi_{i,\ell}|+\frac{|z_\ell|}{|e_\ell|}|\eta_{i,\ell}|\right)\|\nabla\phi_{0,\ell}\|_{L^2(\Omega)}
+\frac{1}{\sqrt{\underline{\sigma}}} \sum_{\ell=1}^L\sum_{k=1}^\ell |\eta_{i,k}| \|\nabla^\bot\psi_{0,\ell}\|_{L^2(\Omega)}\,.
\end{split}
\]
\end{itemize}
\end{proof}

%!TeX root=Topic1.tex
\subsection{Identification of a nonlinear magnetic permeability from magnetic flux measurements} \label{sec:MPP}
The magnetic permeability, i.e., the factor relating the magnetic field strength $H$ to the magnetic flux density $B$, often exhibits a nonlinear behaviour. This is the case in particular in the presence of large field strengths as typical for actuator applications. To determine this nonlinear relation from magnetic flux measurements, either a very specific experimental geometry allowing for model simplifications needs to be employed or the full PDE model incorporating Maxwell's equations has to be taken into account, cf. \cite{KKR03} and the references therein.
We here follow the latter approach and derive a minimization based formulation for the corresponding inverse problem.

\subsubsection{The minimization form of the problem}
The magnetic field $H$ and the magnetic flux density $B$ are related by $B =\mu H$, where $\mu$ is the magnetic permeability. In the presence of large field strengths, it exhibits a dependence on the magnetic field strength $|H|$, so $\mu = \mu(|H|)$ and $B =\mu(|H|) H$.
The typical experimental setup for determining the $B-H$ curve (see Figure \ref{fig:magn} left) or equivalently the permeability curve, is depicted in Figure \ref{fig:magn} right.
\begin{figure}[ht]
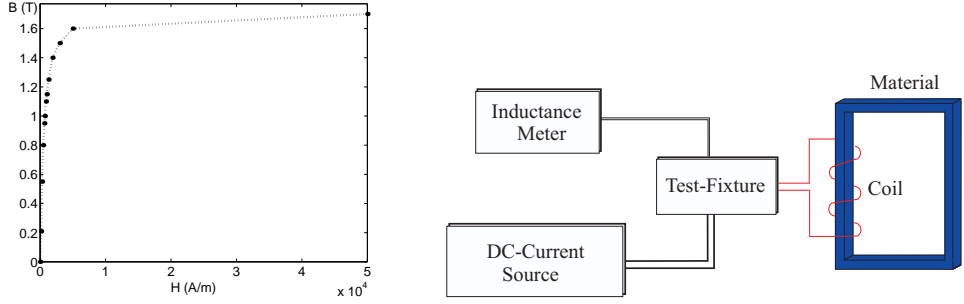

\begin{center}
\includegraphics[width=0.39\textwidth]{HBtyp.eps}
\hspace*{0.05\textwidth}
\includegraphics[width=0.54\textwidth]{BH_setup_neu.eps}
\caption{Typical $B$--$H$-curve shape (left) and measurement setup (right)} \label{fig:magn}
\end{center}
\end{figure}

The current $J_i^{\rm imp}$ impressed by the excitation coil generates a magnetic field $H_i$ and a magnetic flux density $B_i$. They must satisfy Maxwell's equations
$$\nabla \cdot B_i = 0, \quad \nabla \times H_i = J_i^{{\rm imp}}  \quad  \text{ in} \; \Omega\subset \mathbb{R}^3, \quad i=1,\cdots,I$$
and the relation $$B_i = \mu(|H_i|) H_i.$$
For a given sequence of impressed currents $(J_i^\imp)_{i=1,\dots,I}$, we can measure the magnetic fluxes $\overrightarrow{\Phi} = (\Phi_i)_{i=1,\dots,I}$, with
$$\Phi_i = \frac{1}{h} \int_{\Omega_c} B_i \cdot \nu \dd x \quad i=1,\dots,I,$$
where $\Omega_c := \Gamma_c \times [0,h]$ is the region covered by the excitation coil, $\Gamma_c$  is its cross section, $h$ is the coil height and $\nu$ is the unit vector parallel to the coil axis, see \cite{KKR03} for a more detailed description of the measurement setup. By using the vector and scalar potentials $\overrightarrow{A} = (A_i)_{i=1,\dots,I}$, $\overrightarrow{\psi} = (\psi_i)_{i=1,\dots,I}$, with
$$B_i = \nabla \times A_i,  \quad H_i = \nabla \psi_i + A_i^J, \quad i=1, \cdots,I$$
(where $\nabla \times A^J_i = J_i^{{\rm imp}}, i=1,\dots,I$), the problem can be rewritten as 
\begin{equation} \label{eq:permeability_problem}
\left\{
\begin{split}
&\sqrt{\mu_i} (\nabla \psi_i + A_i^J) - \frac{1}{\sqrt{\mu_i}} \nabla \times A_i =0 \text{\quad in~} \Omega, \\
&\psi_i|_{\partial \Omega}=0, \quad \nu \times A_i|_{\partial \Omega}=0, \quad C(\overrightarrow{A},\overrightarrow{\psi}) = \overrightarrow{\Phi}, \quad i=1,\dots,I,
\end{split}
\right.
\end{equation}
where we abbreviate 
\begin{equation} \label{eq:mu}
\mu_i := \mu (|H_i|) = \mu (|\nabla \psi_i + A_i^J|),
\end{equation}
the spaces containing $\mu$, $\overrightarrow  A$, $\overrightarrow  \psi$ are defined by
\begin{equation} \label{eq:permeability_mu_A_phi}
\begin{split}
\mu \in X &:= L^2 ([0,\infty)), \\
(\overrightarrow  A, \overrightarrow  \psi) \in V &:= H_0 (\curl, \Omega)^I \times H_0^1 (\Omega)^I,
\end{split}
\end{equation}
and $C: V \to Y$, ($Y := \mathbb{R}^{I}$ with the maximum norm), is the observation operator defined by
\begin{equation} \label{eq:permeability_C}
C(\overrightarrow{A}, \overrightarrow{\psi}):=  \left( \frac{1}{h} \int_{\Omega_c} B_i \cdot \nu \dd x \right)_{i=1,\dots,I} = \left( \frac{1}{h} \int_{\Omega_c} (\nabla \times A_i) \cdot \nu \dd x \right)_{i=1,\dots,I}.
\end{equation}

To apply the theory from Section \ref{sec:Preliminaries}, we define a right inverse $C^\ri$ of $C$ in the form 
\begin{equation} \label{eq:permeability_Cri}
C^\ri (\overrightarrow  \Phi) = (C^\ri_A (\overrightarrow  \Phi), C^\ri_\psi (\overrightarrow  \Phi)) = \left( (C^\ri_{A_i} (\overrightarrow  \Phi))_{i=1}^I, (C^\ri_{\psi_i} (\overrightarrow  \Phi))_{i=1}^I \right) \in V,
\end{equation}
for all $\overrightarrow  \Phi \in \mathbb{R}^I$. Because of the absence of the component $\overrightarrow  \psi$ in the right hand side of (\ref{eq:permeability_C}), we can choose
\begin{equation} \label{eq:permeability_Cri_psi}
C^\ri_{\psi_i} (\overrightarrow  \Phi)=0, \quad \forall i \in \{1, \dots, I\}.
\end{equation}
Without loss of generality, we can assume that $\nu = (0,0,1)$ and choose the component $C^\ri_A (\overrightarrow  \Phi) = (C^\ri_{A_i} (\overrightarrow  \Phi))_{i=1}^I$ in (\ref{eq:permeability_Cri}) as follows
\begin{equation} \label{eq:permeability_Cri_A}
C^\ri_{A_i} (\overrightarrow  \Phi) (x_1, x_2, x_3) = \left( 0, \frac{\Phi_i}{|\Gamma_c|} x_1, 0 \right)^T, \quad \forall i \in \{1, \dots, I\},
\end{equation}
where $|\Gamma_c|$ is the area of $\Gamma_c$. It is easy to check that $C(C^\ri(\overrightarrow  \Phi)) = \overrightarrow  \Phi$ for all $\overrightarrow  \Phi \in \mathbb{R}^I$.

\begin{remark} \label{re:permeability_choiceOfCri}
It is evident that there are many ways to choose $C^\ri$ instead of (\ref{eq:permeability_Cri}), (\ref{eq:permeability_Cri_psi}), (\ref{eq:permeability_Cri_A}). For example, we can choose $\widetilde{C^\ri} (\overrightarrow{ \Phi}) = C^\ri (\overrightarrow{ \Phi}) + u$ with some $u \in \Ker (C)$. In this paper, we only focus on demonstrating the applicability of our new approach from Section \ref{sec:Preliminaries} so the question of how to optimally choose $C^\ri$ is not considered in more detail here. This remark also applies to the choice of $C^\ri$ (\ref{eq:formof_Cri}), (\ref{eq:formof_psi0}), (\ref{eq:formof_phi0}) in the previous section.
\end{remark}

Now we rewrite the problem (\ref{eq:permeability_problem}) in the form of (\ref{eq:model_observation_mixed}) as
\begin{equation} \label{eq:permeability_problem_mixed}
\left\{
\begin{split}
&E_i(\mu, \hat \psi_i,A_i) =
\sqrt{\mu_i} (\nabla \hat \psi_i + A_i^J) - \frac{1}{\sqrt{\mu_i}} \nabla \times (\hat A_i + C^\ri_{A_i} (\overrightarrow  \Phi)) = 0 \text{\quad in~} \Omega, \\
&(\mu, \overrightarrow{\hat A}, \overrightarrow{\hat \psi}) \in L^\infty (\Omega) \times \Ker(C) = L^\infty (\Omega) \times \{ (\overrightarrow{\hat A}, \overrightarrow{\hat \psi}) \in V: C(\overrightarrow{\hat A}, \overrightarrow{\hat \psi}) = 0 \},
\end{split}
\right.
\end{equation}
where $\overrightarrow{\hat A} = (\hat A_i)_{i=1}^I$, $\overrightarrow{\hat \psi} = (\hat \psi_i)_{i=1}^I$, and $\mu_i$ is defined by \eqref{eq:mu}. 
Additionally, we consider the cost function $\mathcal{J} (\mu, \overrightarrow{\hat A}, \overrightarrow{\hat \psi}; \overrightarrow  \Phi) = \mathcal{Q}_E(\mu, \overrightarrow{\hat A}, \overrightarrow{\hat \psi}; \overrightarrow  \Phi)$, 
\begin{equation} \label{eq:permeability_setting_Q}
\mathcal{Q}_E(\mu, \overrightarrow{\hat A}, \overrightarrow{\hat \psi}; \overrightarrow  \Phi) =  \sum_{i=1}^I 
\mathcal{Q}_i(\mu, {\hat A}_i, {\hat \psi}_i; \overrightarrow  \Phi)
\end{equation}
where 
\begin{equation} \label{eq:permeability_setting_Q_i}
\begin{split}
\mathcal{Q}_i(\mu, {\hat A}_i, {\hat \psi}_i; \overrightarrow  \Phi)=\begin{cases}\frac12\|q_i\|_{L^2(\Omega)}^2 \mbox{ if } q_i\in L^2(\Omega)^3\\
+\infty\mbox{ else}
\end{cases}\\
\mbox{with }q_i= \sqrt{\mu_i} (\nabla \hat \psi_i + A_i^J) - \frac{1}{\sqrt{\mu_i}} \nabla \times (\hat A_i + C^\ri_{A_i} (\overrightarrow  \Phi)),
\end{split}
\end{equation}
the discrepancy measure
\begin{equation} \label{eq:permeability_setting_S}
\mathcal{S} (y_1, y_2) = \| y_2 -y_1 \|_{\infty,\mathbb{R}^{I}},
\end{equation}
the regularization functionals
\begin{equation} \label{eq:permeability_setting_R}
\mathcal{R}: X \times V \to \overline{\mathbb{R}}, \qquad \mathcal{R} (\mu, \overrightarrow{\hat A}, \overrightarrow{\hat \psi}) = \frac{1}{2} \left( \| \overrightarrow{\hat A} \|_{H^s(\Omega)^{3I}}^2 + \|\overrightarrow{\hat \psi} \|_{H^s(\Omega)^{I}}^2 \right),
\end{equation}
where $s>1$ is given, and
\begin{equation} \label{eq:permeability_setting_R_tilde}
\widetilde{\mathcal{R}}: X \times V \to \overline{\mathbb{R}}, \qquad \widetilde{\mathcal{R}}(\mu,\overrightarrow{\hat A}, \overrightarrow{\hat \psi}) = \max\limits_{i=1,\dots,I}\left\| \mu_i - \frac{1}{2}(\overline{\mu}+\underline{\mu}) \right\|_{L^\infty(\Omega)},
\end{equation}
where $\overline{\mu}, \underline{\mu}$ are upper and lower bounds for the permeability ($0<\underline{\mu}<\overline{\mu}$). 

If the measurement data $\overrightarrow  \Phi^\delta = (\Phi_i^\delta)_{i=1}^I$ is obtained with the accuracy of $\delta$, i.e. 
$$\mathcal{S} (\overrightarrow  \Phi, \overrightarrow  \Phi^\delta) = \| \overrightarrow  \Phi^\delta - \overrightarrow  \Phi \|_{\mathbb{R}^I} = \max \{ |\Phi_i^\delta - \Phi_i |: i\in \{1,\dots,I\} \} \le \delta,$$
then the problem (\ref{eq:minIP_noise}) becomes
\begin{equation} \label{eq:permeability_minIP_noise}
\begin{split}
&\begin{split}\min \Bigg\{ \frac{1}{2} \sum_{i=1}^I \int_\Omega \left| \sqrt{\mu_i} (\nabla \hat \psi_i + A_i^J) - \frac{1}{\sqrt{\mu_i}} \nabla \times (\hat A_i + C^\ri_{A_i} (\overrightarrow  \Phi^\delta)) \right|^2 \dd x \\ + \frac{\alpha}{2} \left( \| \overrightarrow{\hat A} \|_{H^s(\Omega)^{3I}}^2 + \|\overrightarrow{\hat \psi} \|_{H^s(\Omega)^{I}}^2 \right):\end{split} \\
& \qquad (\mu, \overrightarrow{\hat A}, \overrightarrow{\hat \psi}) \in \{ X \times V: \| C(\overrightarrow{\hat A}, \overrightarrow{\hat \psi}) \|_{\mathbb{R}^{I}} \le \tau \delta, \; \underline{\mu} \le \mu_i \le \overline{\mu}, \text{\; a.e in \;} \Omega, \; \forall i \} \Bigg\}
\end{split}
\end{equation}
with $X,V$ as in \eqref{eq:permeability_mu_A_phi} and $\mu_i$ as in \eqref{eq:mu}.
In the next subsection, we will prove well-definedness, stability, and convergence of the minimizers of problem (\ref{eq:permeability_minIP_noise}).

\subsubsection{Convergence}
The topology $\mathcal{T}$ in Assumption \ref{ass:Kal18} can be chosen as
\begin{equation} \label{eq:permeability_topology_T}
(\mu^n, \overrightarrow{A}^n, \overrightarrow{\psi}^n) \xrightarrow{\mathcal{T}} (\mu, \overrightarrow{A}, \overrightarrow{\psi}) \Leftrightarrow 
\left\{ \begin{split}
& \begin{split} \mu^n(|H^n_i|) \xrightharpoonup{*} \mu(|H_i|) \text{\; and \;}  \frac{1}{\mu^n(|H^n_i|)} \xrightharpoonup{*} \frac{1}{\mu(|H_i|)}\\ \text{\; in\;} L^{\infty}(\Omega), \quad i=1,\dots,I,\end{split} \\
& \overrightarrow{A}^n \to \overrightarrow{A} \text{\; in \;} H(\curl; \Omega)^I, \quad \overrightarrow{\psi}^n \to \overrightarrow{\psi} \text{\; in \;} H^1(\Omega)^I, \\
& \overrightarrow{A}^n \rightharpoonup \overrightarrow{A} \text{\; in \;} H^s(\Omega)^{3I}, \quad \overrightarrow{\psi}^n \rightharpoonup \overrightarrow{\psi} \text{\; in \;} H^s(\Omega)^I, \\
\end{split} \right.
\end{equation}
for $s>1$ as in \eqref{eq:permeability_setting_R}
where $H^n_i:=\nabla \psi^n_i + A^{J}_i, \; H_i:=\nabla \psi_i + A^J_i, \;i=1,\dots,I$, and the norm $\|\cdot\|_B$ is defined by
\begin{equation} \label{eq:permeability_norm_B}
\| (\mu, \overrightarrow{A}, \overrightarrow{\psi}) \|_B := \max \limits_{i=1,\dots,I} \| \mu(|H_i|) \|_{L^{\infty}(\Omega)} + \| \overrightarrow{A} \|_{H^s(\Omega)^{3I}} + \| \overrightarrow{\psi} \|_{H^s(\Omega)^I} + \| C(\overrightarrow{A},\overrightarrow{\psi})\|_{\mathbb{R}^I}.
\end{equation}
Note that convergence in the combined topology \eqref{eq:permeability_topology_T} is different from weak* $L^\infty([0,\infty))$ convergence of $\mu^n$, see the appendix for more details. As a matter of fact, we can only expect $\mu$ to be determined on the union of the ranges of the magnetic fields corresponding to the exact solution, i.e., on $\bigcup_{i=1}^I\{|\nabla\psi_i^\dagger(x)+A^{J}_i(x)|\, : \, x\in\Omega\}=:\mathcal{D}(\overrightarrow{\psi^\dagger})\subseteq[0,\infty)$. This domain is a priori unknown, though, so we consider $\mu$ as a function on $[0,\infty)$ and, according to the first line in \eqref{eq:permeability_topology_T} trust its reconstruction only on $\mathcal{D}(\overrightarrow{\psi})\subseteq[0,\infty)$, where $\overrightarrow{\psi}$ is the set of reconstructed scalar potentials. 

Similarly to conditions (\ref{eq:priori_information_sigma}), (\ref{eq:priori_information_phi_psi}), we also need
\begin{equation} \label{eq:permeability_priori_information_mu}
\underline{\mu} \le \mu^\dagger(|H^\dagger_i|) \le \overline{\mu}, \quad \text{\; a.e. in \;} \Omega, \quad i\in\{1,\ldots,I\}
\end{equation}
and
\begin{equation} \label{eq:permeability_priori_information_A_psi}
\left( \overrightarrow{\hat A}^\dagger,\overrightarrow{\hat \psi}^\dagger \right) \in H^s (\Omega)^{3I} \times H^s (\Omega)^I
\end{equation}
to verify Assumptions \ref{ass:Maao}, \ref{ass:Maao_newcondition} and get Corollary \ref{cor:permeability} as follows.

\begin{corollary} \label{cor:permeability}
Let (\ref{eq:permeability_priori_information_mu}), (\ref{eq:permeability_priori_information_A_psi}) hold.
\begin{itemize}
\item[(i)] (Existence of minimizers) Then for any $\alpha>0$, a minimizer of (\ref{eq:permeability_minIP_noise}) exists;
\item[(ii)] (Boundedness) and for any sequence $(\overrightarrow  \Phi^n)_{n \in \mathbb{N}} \subset \mathbb{R}^I$ with $\overrightarrow  \Phi^n \to \overrightarrow  \Phi^\delta$ in $\mathbb{R}^I$, the sequence of corresponding regularized minimizers is $\|\cdot\|_B$ bounded for $\|\cdot\|_B$ as in (\ref{eq:permeability_norm_B}).
\item[(iii)] (Convergence) Assume additionally that the choice of $\alpha$ satisfies
\begin{equation*}
\alpha(\delta, \overrightarrow  \Phi^\delta) \to 0 \quad \text{\; and \;} \quad \frac{\delta^2}{\alpha(\delta, \overrightarrow  \Phi^\delta)} \le c_0, \qquad \text{\; as \;} \delta \to 0.
\end{equation*}
Then as $\delta \to 0$, $\overrightarrow  \Phi^\delta \to \overrightarrow  \Phi$, the family of minimizers $\left( \mu^\delta_{\alpha(\delta, \overrightarrow  \Phi^\delta)}, \overrightarrow{\hat A}^\delta_{\alpha(\delta, \overrightarrow  \Phi^\delta)}, \overrightarrow{\hat \psi}^\delta_{\alpha(\delta, \overrightarrow  \Phi^\delta)} \right)$ converges $\mathcal{T}$ subsequentially to a solution $(\mu^\dagger, \overrightarrow{\hat A}^\dagger,\overrightarrow{\hat \psi}^\dagger)$ to the problem (\ref{eq:permeability_problem_mixed}) with exact data $\overrightarrow  \Phi$. 
\end{itemize}
\end{corollary}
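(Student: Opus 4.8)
The plan is to obtain Corollary \ref{cor:permeability} as a direct application of Theorem \ref{the:main_theorem}, mirroring the verification carried out for Corollary \ref{cor:CEM-EIT}; that is, I would check Assumptions \ref{ass:Maao} and \ref{ass:Maao_newcondition} together with the structural conditions (\ref{eq:conditionQ}), (\ref{eq:conditionS_definiteness}), (\ref{eq:conditionS_delta}). The latter hold by construction: $\mathcal{Q}_E \ge 0$ with $\mathcal{Q}_E = 0$ exactly on solutions of (\ref{eq:permeability_problem_mixed}) by definiteness of the $L^2$ norm, and $\mathcal{S}$ is the maximum norm on $\mathbb{R}^I$. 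Since the right inverse (\ref{eq:permeability_Cri_A}) satisfies $C C^\ri = \id$ on all of $\mathbb{R}^I$, we have $\Ima(C) \equiv Y$, so the reduced admissible set (\ref{eq:M_ad^delta_reduced}) is in force and Remark \ref{rem:ass:Maao(v)} lets me drop condition (\ref{eq:conditionS_delta2}) and Assumption \ref{ass:Maao}(v). Items (i) and (ii) of Assumption \ref{ass:Maao} are then nothing but the a priori hypotheses (\ref{eq:permeability_priori_information_mu}) and (\ref{eq:permeability_priori_information_A_psi}).

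For the compactness statement in Assumption \ref{ass:Maao}(iii), the sublevel set is $\|\cdot\|_B$-bounded by the very definitions of $\mathcal{R}$, $\widetilde{\mathcal{R}}$ and $\mathcal{S}$ with the norm (\ref{eq:permeability_norm_B}). To extract a $\mathcal{T}$-convergent subsequence I would, exactly as in the EIT case, use boundedness of $(\overrightarrow{A},\overrightarrow{\psi})$ in $H^s(\Omega)^{3I}\times H^s(\Omega)^I$ to get a weakly convergent subsequence there and, by compactness of the embedding $H^s(\Omega)\hookrightarrow H^1(\Omega)$ for $s>1$ (hence also into $H(\curl;\Omega)$ via $\|\nabla\times A\|_{L^2}\le\|\nabla A\|_{L^2}$), a strongly convergent subsequence in $H(\curl;\Omega)^I\times H^1(\Omega)^I$; the bounds on $\mu_i(|H_i|)$ and $1/\mu_i(|H_i|)$ in $L^\infty(\Omega)$ furnish weak-$*$ convergent subsequences of the composed coefficients. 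This reassembles a limit precisely in the sense of (\ref{eq:permeability_topology_T}).

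The lower semicontinuity required in Assumption \ref{ass:Maao}(iv) is immediate for $\mathcal{R}$, $\widetilde{\mathcal{R}}$ and for the discrepancy composition appearing there, since these are norms and maxima; the work is the $\mathcal{T}$-lower semicontinuity of $\mathcal{Q}_E(\cdot;z)$. Here I would expand each $\mathcal{Q}_i$ into its quadratic, cross and pure-data contributions (as was done for the $\sigma$-problem) and show that every difference vanishes along $\mathcal{T}$-convergent sequences, combining the weak-$*$ convergence of $\mu^n_i$, $1/\mu^n_i$ tested against fixed $L^1$ integrands such as $|\nabla\times A_i|^2$ with the strong $H(\curl)$/$H^1$ convergence of the potentials. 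For Assumption \ref{ass:Maao_newcondition} I would invoke Remark \ref{rem:Ass3} with $W=L^2(\Omega)^3$ and $D_i(\mu,\overrightarrow{\hat A},\overrightarrow{\hat\psi})C^\ri(\overrightarrow{\Phi})=-\tfrac{1}{\sqrt{\mu_i}}\nabla\times C^\ri_{A_i}(\overrightarrow{\Phi})$ (recall $C^\ri_\psi=0$), computing $\nabla\times C^\ri_{A_i}(\overrightarrow{\Phi})$ explicitly from (\ref{eq:permeability_Cri_A}) as a constant field of size $|\Phi_i|/|\Gamma_c|$ and bounding it through the lower bound $\underline{\mu}$ to produce $\gamma(t)=c(1+t)t$; this is what yields the parameter rule $\delta^2/\alpha\le c_0$ stated in (iii).

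I expect the genuine obstacle to lie not in any single estimate but in the nonlinear, composed nature of the coefficient $\mu_i=\mu(|\nabla\psi_i+A_i^J|)$, which is exactly why the topology (\ref{eq:permeability_topology_T}) is built around the products $\mu(|H_i|)$ rather than around $\mu$ itself. The delicate step is to argue that a $\mathcal{T}$-subsequential limit $(\mu^\dagger,\overrightarrow{\hat A}^\dagger,\overrightarrow{\hat\psi}^\dagger)$ really is a solution of (\ref{eq:permeability_problem_mixed}): once Theorem \ref{the:main_theorem}(c) delivers $\mathcal{Q}_E(\mu^\dagger,\cdot;\overrightarrow{\Phi})=0$, condition (\ref{eq:conditionQ}) gives the model equation, but one must keep in mind that weak-$*$ convergence of the compositions $\mu^n(|H_i^n|)$ does not determine a single profile $\mu^\dagger$ on all of $[0,\infty)$. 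As already flagged before the statement, $\mu^\dagger$ can only be identified on the attained range $\mathcal{D}(\overrightarrow{\psi^\dagger})$, so the care needed is in phrasing the convergence conclusion accordingly rather than in the compactness or semicontinuity mechanics, which run in parallel with Corollary \ref{cor:CEM-EIT}.
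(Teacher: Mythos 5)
Your proposal is correct and follows essentially the same route as the paper: the paper's proof also reduces everything to Theorem \ref{the:main_theorem} by citing the verification pattern of Corollary \ref{cor:CEM-EIT}, and spells out only the two points you identify as the real work, namely the $\mathcal{T}$ lower semicontinuity of $\mathcal{Q}_E$ (via the same quadratic/cross/data-term expansion, pairing weak-$*$ convergence of $\mu^n_i$ and $1/\mu^n_i$ against fixed $L^1$ integrands and using the strong $H^1$/$H(\curl)$ convergence of the potentials) and Assumption \ref{ass:Maao_newcondition} via Remark \ref{rem:Ass3} with $W=L^2(\Omega)^3$ and exactly your operator $D_i(\mu,\hat A_i,\hat\psi_i)C^\ri(\overrightarrow{\Phi})=-\tfrac{1}{\sqrt{\mu_i}}\nabla\times C^\ri_{A_i}(\overrightarrow{\Phi})=-\tfrac{1}{\sqrt{\mu_i}|\Gamma_c|}(0,0,1)^T\Phi_i$. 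Your closing caveat about $\mu^\dagger$ being determined only on the attained range $\mathcal{D}(\overrightarrow{\psi^\dagger})$ matches the paper's own discussion preceding the corollary, so it is a faithful reading rather than a deviation.
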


\begin{proof}
The proof is very similar to the proof of Corollary \ref{cor:CEM-EIT}. Here we only verify the $\mathcal{T}$ lower semicontinuity of $\mathcal{Q}_E$ in Assumption \ref{ass:Maao}(iv) and Assumption \ref{ass:Maao_newcondition}.
\begin{itemize}
\item The $\mathcal{T}$ lower semicontinuity of $\mathcal{Q}_E$ is verified by the continuity of $Q_i$ according to \eqref{eq:permeability_setting_Q_i}.
Indeed, we have
\begin{equation*}
\begin{split}
& |Q_i (\mu^n, \hat A^n_i, \hat \psi^n_i; \overrightarrow  \Phi) - Q_i (\mu, \hat A_i, \hat \psi_i; \overrightarrow  \Phi)| \\
& \; = \Bigg| \frac{1}{2} \underbrace{\int_\Omega \left( \mu^n_i |\nabla \hat \psi^n_i + A^J_i|^2 - \mu_i |\nabla \hat \psi_i + A_i^J|^2 \right)\dd x}_{I_1} \\
& \quad + \frac{1}{2} \underbrace{\int_\Omega \left( \frac{1}{\mu^n_i} |\nabla \times (\hat A^n_i + C^\ri_{A_i} (\overrightarrow  \Phi))|^2 - \frac{1}{\mu_i} |\nabla \times (\hat A_i + C^\ri_{A_i} (\overrightarrow  \Phi))|^2 \right) \dd x}_{I_2} \\
& \quad - \underbrace{\int_\Omega \left( (\nabla \hat \psi^n_i + A^J_i) \cdot \nabla \times (\hat A^n_i + C^\ri_{A_i} (\overrightarrow  \Phi)) - (\nabla \hat \psi_i + A^J_i) \cdot \nabla \times (\hat A_i + C^\ri_{A_i} (\overrightarrow  \Phi)) \right) \dd x}_{I_3} \Bigg|,
\end{split}
\end{equation*}
where
\begin{equation*}
\begin{split}
|I_1| & = \Big| \int_\Omega (\mu^n_i - \mu_i) |\nabla \hat \psi_i + A^J_i|^2 \dd x + \int_\Omega \mu^n_i \left( |\nabla \hat \psi^n_i + A^J_i|^2 - |\nabla \hat \psi_i + A^J_i|^2 \right) \dd x \Big| \\
& \le \Big| \int_\Omega (\mu^n_i - \mu_i) |\nabla \hat \psi_i + A^J_i|^2 \dd x \Big| \\
& \qquad \qquad + \overline{\mu} \Big| \int_\Omega ( \nabla \hat \psi^n_i + \nabla \hat \psi_i + 2 A^J_i) \cdot (\nabla \hat \psi^n_i - \nabla \hat \psi_i) \dd x \Big| \\
& \le \Big| \int_\Omega (\mu^n_i - \mu_i) |\nabla \hat \psi_i + A^J_i|^2 \dd x \Big| \\
& \qquad \qquad + \overline{\mu} \| \nabla \hat \psi^n_i + \nabla \hat \psi_i + 2 A^J_i \|_{L^2(\Omega)^3} \| \nabla \hat \psi^n_i - \nabla \hat \psi_i \|_{L^2(\Omega)^3} \\
& \to 0, \text{\; as \;} n \to \infty,
\end{split}
\end{equation*}
since $\mu^n_i \xrightharpoonup{*} \mu_i$ in $L^\infty (\Omega)$, $|\nabla \hat \psi_i + A^J_i|^2 \in L^1 (\Omega)$, and $\nabla \hat \psi^n_i \to \nabla \hat \psi_i$ in $L^2(\Omega)^3$; $I_2 \to 0$ in a similar way; and the absolute value of the last integral
\begin{equation*}
\begin{split}
|I_3| & \le \Bigg| \int_\Omega (\nabla \hat \psi^n_i - \nabla \hat \psi_i) \cdot \nabla \times (\hat A_i + C^\ri_{A_i} (\overrightarrow  \Phi)) \dd x \Bigg| \\
& \quad \qquad + \Bigg| \int_\Omega (\nabla \hat \psi^n_i + A^J_i) \cdot \left( \nabla \times (\hat A^n_i + C^\ri_{A_i} (\overrightarrow  \Phi)) - \nabla \times (\hat A_i + C^\ri_{A_i} (\overrightarrow  \Phi)) \right) \dd x \Bigg| \\
& \le \| \nabla \hat \psi^n_i - \nabla \hat \psi_i \|_{L^2(\Omega)^3} \|\nabla \times (\hat A_i + C^\ri_{A_i} (\overrightarrow  \Phi))\|_{L^2(\Omega)^3} \\
& \quad \qquad + \|\nabla \hat \psi^n_i + A^J_i\|_{L^2(\Omega)^3} \| \nabla \times \hat A^n_i - \nabla \times \hat A_i\|_{L^2(\Omega)^3} \\
& \to 0, \text{\; as \;} n \to \infty,
\end{split}
\end{equation*}
since $\nabla \hat \psi^n_i \to \nabla \hat \psi_i$ in $L^2(\Omega)^3$, and $\nabla \times \hat A^n_i \to \nabla \times \hat A_i$ in $L^2(\Omega)^3$.
\item Assumption \ref{ass:Maao_newcondition} is verified by Remark \ref{rem:Ass3} with $W=L^2(\Omega)^3$, 
\[
D_i(\mu, \hat A_i, \hat \psi_i) C^\ri(\overrightarrow  \Phi) = - \frac{1}{\sqrt{\mu_i}} \nabla \times C^\ri_{A_i} (\overrightarrow  \Phi)
= - \frac{1}{\sqrt{\mu_i}|\Gamma_c|} (0,0,1)^T \Phi_i
\]
cf. \eqref{eq:permeability_Cri_A}.
\end{itemize}
\end{proof}

\subsection{Localization of sound sources from microphone array measurements} \label{sec:SoundSources}
The problem of localizing sound sources from remote measurements of the sound pressure arises in a multitude of applications, such as failure diagnosis and monitoring as well as in sound design or noise reduction tasks. Under the simplifying assumption of unperturbed sound propagation in free space, it basically reduces to a signal processing problem (more precisely, deconvolution with respect to the free space Green's function for the Helmholtz equation) and can be solved by so-called beamforming methods and refined variants thereof, see, e.g., \cite{Mueller2002,Brooks2004,Sijtsma2009}. In realistic experimental scenarios, more complicated geometries, in particular bounded domains with combinations of reflecting and partially absorbing wall parts need to be taken into account by considering the wave or Helmholtz equation with appropriate boundary conditions as a model, cf., e.g., \cite{KKG18,Schumacher03}.

\subsubsection{The minimization form of the problem}
Acoustic wave propagation in a linear low amplitude regime is governed by the well-known wave equation
\begin{equation}\label{wave}
\frac{1}{c_0^2}p_{tt}-\Delta p = \sigma
\end{equation}
where $c_0$ is the speed of sound, $p$ is the acoustic pressure and $\sigma=\sigma(x)$ represents the distributed sound sources. We here, like in the previous sections, aim at formulating the problem as a system of first order PDEs and therefore go back to the (linearized versions of the) fundamental physical laws governing acoustic wave propagation, 
\[
\left\{
\begin{split}
&\mbox{linearized conservation of momentum: }\ \varrho_0 v_t + \nabla p_\sim = f, \\
&\mbox{linearized conservation of mass: }\ \varrho_{\sim t} + \varrho_0 \nabla \cdot v = g, \\
&\mbox{linearized equation of state: }\ \varrho_\sim = \tfrac{1}{c_0^2} p_\sim,
\end{split}
\right.
\]
which can be rephrased as 
\begin{equation} \label{eq:ss_linear_acoustic}
\left\{
\begin{split}
& \varrho_0 v_t + \nabla p_\sim = f, \\
& \tfrac{1}{c_0^2} p_{\sim t} + \varrho_0 \nabla \cdot v = g,
\end{split}
\right.
\end{equation}
where (with the subscripts $_\sim$ and $_0$ denoting fluctuating part and constant mean value, respectively)
\begin{itemize}
\item $\varrho = \varrho_0 + \varrho_\sim$ is the mass density 
\item $v$ is the acoustic particle velocity,
\item $p = p_0 + p_\sim$ is the pressure,
\item $c_0$ is the speed of sound.
\end{itemize}
Note that the second order wave equation \eqref{wave} can be derived from this by subtracting the divergence of the first line from the time derivative of the second line, thus eliminating the velocity. Via the identity $\sigma = g_t-\nabla\cdot f$, the functions $g$ and $f$ represent the searched for sound sources. As a matter of fact, in the identification process below, one may skip one of the two functions $f$ or $g$ -- preferably the latter, since $\sigma= -\nabla\cdot f$ allows to represent nonsmooth sources while still dealing with an $L^p$ function $f$ and also since it appears physically more meaningful to regard a sound source as giving rise to a momentum rather than giving rise to a change of mass density .

Imposing the above equations \eqref{eq:ss_linear_acoustic} on a domain $\Omega\subseteq\mathbb{R}^d$ with $d\in\{2,3\}$ and taking the Fourier transform with respect to time $t$ we get
\begin{equation} \label{eq:ss_linear_acoustic_ft}
\left\{
\begin{split}
\varrho_0 \I \omega v^\ft + \nabla p^\ft = f^\ft & \text{\; in \;} \Omega, \\
\tfrac{1}{c_0^2} \I \omega p^\ft + \varrho_0 \nabla \cdot v^\ft = g^\ft & \text{\; in \;} \Omega,
\end{split}
\right.
\end{equation}
with $v^\ft := \mathcal{F}^t v$, $p^\ft := \mathcal{F}^t p_\sim$, $f^\ft := \mathcal{F}^t f$, $g^\ft := \mathcal{F}^t g$, $\Omega \subset \mathbb{R}^d$ $(d \ge 2)$. The boundary of $\Omega$ is assumed to consist two parts, $\partial \Omega = \Gamma_r \cup \Gamma_a$, where $\Gamma_r$ is the sound hard part of the boundary  and $\Gamma_a = \partial \Omega \backslash \Gamma_r$ is the set of absorbing walls, see \cite{PTTW18} or represents a nonreflecting boundary that is used to enable truncation of the computational domain, see, e.g., the classical reference \cite{EngquistMajda1977} and the citing literature. Correspondingly, we impose the boundary conditions 
\begin{equation} \label{eq:ss_linear_acoustic_boundaryCondition}
\left\{
\begin{split}
\varrho_0 v^\ft \cdot \nu + \kappa p^\ft = 0 & \text{\; on \;} \Gamma_a, \\
v^\ft \cdot \nu = 0 & \text{\; on \;} \Gamma_r,
\end{split}
\right.
\end{equation}
where $\kappa \in \mathbb{R}$ is a positive constant depending on the properties of walls on $\Gamma_a$; typically $\kappa=c$ in case of computational absorbing boundary conditions (ABC).

By separating the real and imaginary parts of (\ref{eq:ss_linear_acoustic_ft}) with
\begin{equation*}
f^\ft = f_\Re + \I f_\Im, \qquad g^\ft = g_\Re + \I g_\Im, \qquad p^\ft = p_\Re + \I p_\Im, \qquad v^\ft = v_\Re + \I v_\Im,
\end{equation*}
we can see that the model for this problem is $E \left( f_\Re, f_\Im,  p_\Re, p_\Im, v_\Re, v_\Im \right) = 0$ with
\begin{equation} \label{eq:ss_model}
E \left(  f_\Re, f_\Im, g_\Re, g_\Im, p_\Re, p_\Im, v_\Re, v_\Im \right) =
\begin{pmatrix}
- \varrho_0 \omega v_\Im + \nabla p_\Re - f_\Re \\
\varrho_0 \omega v_\Re + \nabla p_\Im - f_\Im \\
- \frac{1}{c_0^2} \omega p_\Im + \varrho_0 \nabla \cdot v_\Re - g_\Re \\
\frac{1}{c_0^2} \omega p_\Re + \varrho_0 \nabla \cdot v_\Im - g_\Im
\end{pmatrix}.
\end{equation}
This allows to work exclusively in spaces of real valued functions and thus to avoid the potential trouble arising from nondifferentiability of the squared absolute value function $z\mapsto|z|^2$ in $\mathbb{C}$.

Taking again the $L^2(\Omega)$ norm of the model residual for defining the cost function (see \eqref{eq:ss_QP}, \eqref{Q1Q2Q3Q4} below), suggests to use the function space setting $(p_\Re, p_\Im) \in H^1(\Omega)^2$ and $(v_\Re, v_\Im) \in H(\dive, \Omega)^2$. Combining with the boundary conditions (\ref{eq:ss_linear_acoustic_boundaryCondition}) and noting that $v_\Re \cdot \nu, v_\Im \cdot \nu \in H^{-1/2} (\partial \Omega)$ (see \cite[Theorem 3.24]{Mon03}), we get the definition of the space $V$ containing the state $(p_\Re, p_\Im, v_\Re, v_\Im)$ as
\begin{equation} \label{eq:ss_V0}
\begin{split}
V &\subseteq V_0:=\Big\{ (p_\Re, p_\Im, v_\Re, v_\Im) \in H^1 (\Omega)^2 \times H (\dive,\Omega)^2: \\
& \qquad \qquad
\varrho_0 v_\Re \cdot \nu + \kappa p_\Re = 0, \quad \varrho_0 v_\Im \cdot \nu + \kappa p_\Im = 0 \text{\; in \;} H^{-1/2}(\Gamma_a), \\
& \qquad \qquad v_\Re \cdot \nu = 0, \quad v_\Im \cdot \nu = 0  \text{\; in \;} H^{-1/2} (\Gamma_r)
\Big\}.
\end{split}
\end{equation}
Microphone array measurements of the acoustic pressure are modelled by point values $p(x_\ell)$, $\ell\in\{1,\ldots,L\}$, where $x_\ell\in\Omega$ denotes the (known) location of the $\ell$-th microphone. 
Thus, the inverse problem under consideration is to find $(f_\Re, f_\Im, g_\Re, g_\Im) \in X$ with
\begin{equation} \label{eq:ss_X0}
X \subseteq X_0:=L^2 (\Omega)^d \times L^2 (\Omega)^d \times L^2 (\Omega) \times L^2 (\Omega),
\end{equation}
from the data 
\begin{equation} \label{eq:ss_C}
y=C(p_\Re, p_\Im, v_\Re, v_\Im):= (p_\Re (x_\ell),p_\Im (x_\ell))_{\ell=1}^L \in \mathbb{R}^{2L}.
\end{equation}
To guarantee sufficient regularity of $\hat p_\Re, \hat p_\Im$ at the measurement points so that the observation operator $C:V\to \mathbb{R}^{2L}$ is bounded -- note that $H^1$ functions in general do not admit point evaluation -- we assume that the support of the sources is separated from the measurement domain, i.e., 
\begin{eqnarray}
&&\Omega_{ms}\subseteq\Omega\,,\ \Omega_{ms}\mbox{ open } \label{eq:ss_X}\\
&& X=\{(f_\Re, f_\Im, g_\Re, g_\Im)\in X_0\ : \ \mbox{suppess}(h)\,\subseteq\Omega\setminus\Omega_{ms}\,, \ 
h\in\{f_\Re,f_\Im,g_\Re,g_\Im\}\,\}\,, 
\nonumber\\
&& V=\{(p_\Re, p_\Im, v_\Re, v_\Im)\in V_0\ : \ p_\Re\vert_{\Omega_{ms}},\, p_\Im\vert_{\Omega_{ms}}
%,\,  v_\Re\vert_{\Omega_{ms}},\,  v_\Im\vert_{\Omega_{ms}}  
\,\in C(\Omega_{ms})\}\,.
\label{eq:ss_V}
\end{eqnarray}
Indeed, for Fourier transformed solutions $p$ of \eqref{wave}, $H^2$ smoothness, and therefore -- via Sobolev's embedding -- continuity, follows immediately from interior regularity results for the homogeneous Helmholtz equation on $\Omega_{ms}$; therefore the exact solution of the inverse problem is indeed contained in $V$.

To solve the problem using the theory from Section \ref{sec:Preliminaries}, we choose a right inverse $C^\ri$ of $C$ as 
\begin{equation} \label{eq:ss_Cri}
C^\ri: \mathbb{R}^{2L} \to V, \qquad C^\ri (y) = \left( \sum_{\ell=1}^L y_{\Re,\ell} p_{0\Re,\ell}, \sum_{\ell=1}^L y_{\Im,\ell} p_{0\Im,\ell}, 0, 0 \right),
\end{equation}
where $y=(y_{\Re,1}, y_{\Im,1}, \dots, y_{\Re,L}, y_{\Im,L})\in\mathbb{R}$ and the functions $p_{0\Re,\ell} \in H^1 (\Omega)$, $p_{0\Im,\ell} \in H^1 (\Omega)$ are chosen such that $(p_{0\Re,\ell}, p_{0\Im,\ell}, 0, 0) \in V$,  
%($\Leftrightarrow p_{0\Re,\ell} = p_{0\Im,\ell} = 0$ in $H^{-1/2}(\Gamma_a)$, $\forall i$), 
and $p_{0\Re,\ell} (x_j) = p_{0\Im,\ell} (x_j) = \delta_{\ell j}$ for $\ell,j\in\{1,\ldots,L\}$.

Now we rewrite our problem in the form of (\ref{eq:model_observation_mixed}) as
\begin{equation} \label{eq:ss_model_mixed}
\left\{
\begin{split}
- \varrho_0 \omega v_\Im + \nabla \left( \hat p_\Re + \sum_{\ell=1}^L y_{\Re,\ell} p_{0\Re,\ell} \right) - f_\Re = 0 \\
\varrho_0 \omega v_\Re + \nabla \left( \hat p_\Im + \sum_{\ell=1}^L y_{\Im,\ell} p_{0\Im,\ell} \right) - f_\Im = 0 \\
- \frac{1}{c_0^2} \omega \left( \hat p_\Im + \sum_{\ell=1}^L y_{\Im,\ell} p_{0\Im,\ell} \right) + \varrho_0 \nabla \cdot v_\Re - g_\Re = 0 \\
\frac{1}{c_0^2} \omega \left( \hat p_\Re + \sum_{\ell=1}^L y_{\Re,\ell} p_{0\Re,\ell} \right) + \varrho_0 \nabla \cdot v_\Im - g_\Im = 0 \\
(f_\Re, f_\Im, g_\Re, g_\Im, \hat p_\Re, \hat p_\Im, v_\Re, v_\Im) \in X \times \Ima (C),
\end{split}
\right.
\end{equation}
where $\Ima (C) = \{ (\hat p_\Re, \hat p_\Im, v_\Re, v_\Im) \in V: \hat p_{\Re} (x_\ell) = \hat p_{\Im} (x_\ell) = 0, \forall \ell\in \{1,\dots, I \} \}$. 

The cost function $\mathcal{J} = \mathcal{Q}_E$ is chosen as
\begin{equation} \label{eq:ss_QP}
\begin{split}
& \mathcal{Q}_E \left(  f_\Re, f_\Im, g_\Re, g_\Im, \hat p_\Re, \hat p_\Im, v_\Re, v_\Im; y \right) \\
& \qquad = \mathcal{Q}_1 (f_\Re, \hat p_\Re, v_\Im, y_\Re) + \mathcal{Q}_2 (f_\Im, \hat p_\Im, v_\Re, y_\Im) \\
& \qquad + \mathcal{Q}_3 (g_\Re, \hat p_\Im, v_\Re, y_\Im) + \mathcal{Q}_4 (g_\Im, \hat p_\Re, v_\Im, y_\Re)
\end{split}
\end{equation}
where $y_\Re = (y_{\Re,\ell})_{\ell=1}^L$, $y_\Im = (y_{\Im,\ell}))_{\ell=1}^L$ and
\begin{equation}\label{Q1Q2Q3Q4}
\begin{split}
&\mathcal{Q}_{1/2} (f_{\Re/\Im}, \hat p_{\Re/\Im}, v_{\Im/\Re}, y_{\Re/\Im})  
:=\begin{cases}\frac12\|q_{1/2}\|_{L^2(\Omega)^d}^2 \mbox{ if } q_{1/2}\in L^2(\Omega)^d\\
+\infty\mbox{ else}
\end{cases}
\\
&\mathcal{Q}_{3/4} (g_{\Re/\Im}, \hat p_{\Im/\Re}, v_{\Re/\Im}, y_{\Im/\Re})  
:=\begin{cases}\frac12\|q_{3/4}\|_{L^2(\Omega)}^2 \mbox{ if } q_{3/4}\in L^2(\Omega)\\
+\infty\mbox{ else}
\end{cases}
\\
\mbox{with }
&q_1:= - \varrho_0 \omega v_\Im + \nabla \left( \hat p_\Re + \sum_{\ell=1}^L y_{\Re,\ell} p_{0\Re,\ell} \right) - f_\Re\\
&q_2:=\varrho_0 \omega v_\Re + \nabla \left( \hat p_\Im + \sum_{\ell=1}^L y_{\Im,\ell} p_{0\Im,\ell} \right) - f_\Im\\
&q_3:= - \frac{1}{c_0^2} \omega \left( \hat p_\Im + \sum_{\ell=1}^L y_{\Im,\ell} p_{0\Im,\ell} \right) + \varrho_0 \nabla \cdot v_\Re - g_\Re\\
&q_4:= \frac{1}{c_0^2} \omega \left( \hat p_\Re + \sum_{\ell=1}^L y_{\Re,\ell} p_{0\Re,\ell} \right) + \varrho_0 \nabla \cdot v_\Im - g_\Im ,
%\mathcal{Q}_1 (f_\Re, \hat p_\Re, v_\Im, y_\Re)\\
%\mathcal{Q}_2 (f_\Im, \hat p_\Im, v_\Re, y_\Im)\\
%\mathcal{Q}_3 (g_\Re, \hat p_\Im, v_\Re, y_\Im)\\
%\mathcal{Q}_4 (g_\Im, \hat p_\Re, v_\Im, y_\Re)\\ 
\end{split}
\end{equation}
So differently from the previous two examples, the number of measurements is not reflected in the number of terms in the cost functional. The measurements are  rather imposed via the requirement of $\hat{p}_\Re$ and $\hat{p}_\Im$ vanishing (or being smaller than $\tau\delta$) at the measurement points $x_\ell$.

The choices for the regularization functional $\mathcal{R}$ and the discrepancy $\mathcal{S}$ are
\begin{equation} \label{eq:ss_S}
\mathcal{S} (y,z) = \| z - y \|_{\infty,\mathbb{R}^{2L}},
\end{equation}
\begin{equation} \label{eq:ss_R}
\begin{split}
& \mathcal{R} \left( f_\Re, f_\Im, g_\Re, g_\Im, \hat p_\Re, \hat p_\Im, v_\Re, v_\Im \right) \\
& \qquad = \frac{1}{2} \| f_\Re \|^2_{L^2(\Omega)^d} + \frac{1}{2} \| f_\Im \|^2_{L^2(\Omega)^d} + \frac{1}{2} \| g_\Re \|^2_{L^2(\Omega)} + \frac{1}{2} \| g_\Im \|^2_{L^2(\Omega)} \\
& \qquad \qquad + \frac{1}{2} \| \hat p_\Re \|^2_{L^2(\Omega)} + \frac{1}{2} \| \hat p_\Im \|^2_{L^2(\Omega)} + \frac{1}{2} \| v_\Re \|^2_{L^2(\Omega)^d} + \frac{1}{2} \| v_\Im \|^2_{L^2(\Omega)^d},
\end{split}
\end{equation}
As a matter of fact, boundedness of the $L^2$ norms of  $\hat p_\Re$, $\hat p_\Im$, $v_\Re$, $v_\Im$, combined with boundedness of $\mathcal{Q}_E$ will allow us to bound higher order norms of these states.
Also note that here the problem of needing the regularization term in order to guarantee existence of a minimizer does not arise here, as opposed to the two previous examples. This is essentially due to the linearity of the inverse problem under consideration here.

Quite often in practice, sources are supported on a set of points or along lines in two or three dimensional space. In order to account for this we enhance sparsity of the recovered sources by using the functional
\begin{equation} \label{eq:ss_Rtilde}
\begin{split}
&\widetilde{\mathcal{R}} (f_\Re, f_\Im, g_\Re, g_\Im, \hat p_\Re, \hat p_\Im, v_\Re, v_\Im) \\
&= \| \nabla \cdot f_\Re \|_{\mathcal{M}} + \| \nabla \cdot f_\Im \|_{\mathcal{M}} + \| \nabla g_\Re \|_{\mathcal{M}} + \| \nabla g_\Im \|_{\mathcal{M}},
\end{split}\end{equation}
where $\mathcal{M} = C_b(\Omega)^*$ is the space of Radon measures on $\Omega$, cf. \cite{BrediesPikkarainen13,CasasClasonKunisch12,PTTW18}.

Again, instead of the exact data $y=(y_{\Re,1}, y_{\Im,1}, \dots, y_{\Re,L}, y_{\Im,L})$, we only have a noisy version $y^\delta = (y^\delta_{\Re,1}, y^\delta_{\Im,1}, \dots, y^\delta_{\Re,L}, y^\delta_{\Im,L})$ with the accuracy of $\delta$ in the sense of
$$\mathcal{S} (y, y^\delta) = \| y^\delta - y \|_{\infty,\mathbb{R}^{2L}} \le \delta$$
and the regularized problem is to find
\begin{equation} \label{eq:ss_model_mixed_noisy}
\begin{split}
\argmin & \Big\{ \mathcal{Q}_E \left(  f_\Re, f_\Im, g_\Re, g_\Im, \hat p_\Re, \hat p_\Im, v_\Re, v_\Im; y^\delta \right) + \alpha \mathcal{R} \left(  f_\Re, f_\Im, g_\Re, g_\Im, \hat p_\Re, \hat p_\Im, v_\Re, v_\Im \right) \Big \},\\
& \text{\; s.t. \;} \left\{
\begin{array} {l} \left(  f_\Re, f_\Im, g_\Re, g_\Im, \hat p_\Re, \hat p_\Im, v_\Re, v_\Im \right) \in X \times V, \\
\max_{\ell\in \{1, \dots, L\}} \{ |\hat p_\Re (x_\ell)|, |\hat p_\Im (x_\ell)| \} \le \tau \delta, \\
\widetilde{\mathcal{R}} (f_\Re, f_\Im, g_\Re, g_\Im, \hat p_\Re, \hat p_\Im, v_\Re, v_\Im) \le \rho,
\end{array} \right.
\end{split}
\end{equation}
where $\rho>0$ is a given constant (note to be mistaken with the mass density here) such that the exact solution satisfies 
\begin{equation} \label{eq:ss_priori_information_fg}
%\widetilde{\mathcal{R}} (f^\dagger_\Re, f^\dagger_\Im, g^\dagger_\Re, g^\dagger_\Im, \hat p^\dagger_\Re, \hat p^\dagger_\Im, v^\dagger_\Re, v^\dagger_\Im)=
\| \nabla \cdot f_\Re^\dagger \|_{\mathcal{M}} + \| \nabla \cdot f_\Im^\dagger \|_{\mathcal{M}} + \| \nabla g_\Re^\dagger \|_{\mathcal{M}} + \| \nabla g_\Im^\dagger \|_{\mathcal{M}} \le \rho
\end{equation}
and $\tau > 1$ is fixed.

\subsubsection{Convergence}
To achieve convergence results, we use the priori information \eqref{eq:ss_priori_information_fg}
and choose the topology $\mathcal{T}$ and the norm $\| \cdot \|_B$ as 
\begin{equation} \label{eq:ss_topologyT}
\begin{split}
& (f_\Re^n, f_\Im^n, g_\Re^n, g_\Im^n, \hat p_\Re^n, \hat p_\Im^n, v_\Re^n, v_\Im^n) \xrightarrow{\mathcal{T}} (f_\Re, f_\Im, g_\Re, g_\Im, \hat p_\Re, \hat p_\Im, v_\Re, v_\Im) \\
& \qquad \Leftrightarrow \left\{
\begin{array} {l}
f_\Re^n \rightharpoonup f_\Re, \quad f_\Im^n \rightharpoonup f_\Im \quad \text{\; in \;} L^2(\Omega)^d, \\
g_\Re^n \rightharpoonup g_\Re, \quad g_\Im^n \rightharpoonup g_\Im \quad \text{\; in \;} L^2(\Omega), \\
\hat p_\Re^n \rightharpoonup \hat p_\Re, \quad \hat p_\Im^n \rightharpoonup \hat p_\Im \quad \text{\; in \;} H^1(\Omega), \\
v_\Re^n \rightharpoonup v_\Re, \quad v_\Im^n \rightharpoonup v_\Im \quad \text{\; in \;} H(\dive, \Omega), \\
\nabla \cdot f_\Re^n \xrightharpoonup{*} \nabla \cdot f_\Re, \quad \nabla \cdot f_\Im^n \xrightharpoonup{*} \nabla \cdot f_\Im \quad \text{\; in \;} \mathcal{M}, \\
\nabla g_\Re^n \xrightharpoonup{*} \nabla g_\Re, \quad \nabla g_\Im^n \xrightharpoonup{*} \nabla g_\Im \quad \text{\; in \;} \mathcal{M}, \\
\left( \hat p_\Re^n (x_\ell), \hat p_\Im^n (x_\ell) \right)_{\ell=1}^L \to \left( \hat p_\Re (x_\ell), \hat p_\Im (x_\ell) \right)_{\ell=1}^L\quad \text{\; in \;} \mathbb{R}^{2L},
\end{array}
\right.
\end{split}
\end{equation}

\begin{equation} \label{eq:ss_normB}
\begin{split}
& \| (f_\Re, f_\Im, g_\Re, g_\Im, \hat p_\Re, \hat p_\Im, v_\Re, v_\Im) \|_B \\
& \qquad = \| f_\Re \|_{L^2(\Omega)^d} + \| f_\Im \|_{L^2(\Omega)^d} + \| g_\Re \|_{L^2(\Omega)} + \| g_\Im \|_{L^2(\Omega)} \\
& \qquad \quad + \| \nabla \cdot f_\Re \|_{\mathcal{M}} + \| \nabla \cdot f_\Im \|_{\mathcal{M}} + \| \nabla g_\Re \|_{\mathcal{M}} + \| \nabla g_\Im \|_{\mathcal{M}} \\
& \qquad \quad + \| \hat p_\Re \|_{H^1(\Omega)} + \| \hat p_\Im \|_{H^1(\Omega)} + \| v_\Re \|_{H(\dive, \Omega)} + \| v_\Im \|_{H(\dive,\Omega)} \\
& \qquad \quad + \| \left( \hat p_\Re (x_\ell), \hat p_\Im (x_\ell) \right)_{\ell=1}^L \|_{\mathbb{R}^{2L}}.
\end{split}
\end{equation}

\begin{corollary} \label{cor:ss}
Let (\ref{eq:ss_priori_information_fg}) hold.
\begin{itemize}
\item[(i)] (Existence of minimizers) Then for any $\alpha>0$, a minimizer of (\ref{eq:ss_model_mixed_noisy}) exists;
\item[(ii)] (Boundedness) and for any sequence $(y^n)_{n \in \mathbb{N}} \subset \mathbb{R}^{2L}$ with $y^n \to y^\delta$ in $\mathbb{R}^{2L}$, the sequence of corresponding regularized minimizers is $\|\cdot\|_B$ bounded.
\item[(iii)] (Convergence) Assume additionally that the choice of $\alpha$ satisfies
\begin{equation*}
\alpha(\delta, y^\delta) \to 0 \quad \text{\; and \;} \quad \frac{\delta^2}{\alpha(\delta, y^\delta)} \le c_0, \qquad \text{\; as \;} \delta \to 0.
\end{equation*}
Then as $\delta \to 0$, $y^\delta \to y$, the family of minimizers of (\ref{eq:ss_model_mixed_noisy}) converges $\mathcal{T}$ subsequentially to a solution to the problem (\ref{eq:ss_model_mixed}) with the exact data $y$. 
\end{itemize}
\end{corollary}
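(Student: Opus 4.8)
The plan is to deduce all three statements from Theorem \ref{the:main_theorem} by verifying Assumption \ref{ass:Maao} and Assumption \ref{ass:Maao_newcondition} for the choices \eqref{eq:ss_QP}--\eqref{eq:ss_Rtilde}, exactly as in the proofs of Corollaries \ref{cor:CEM-EIT} and \ref{cor:permeability}. The decisive structural difference here is that the inverse problem is \emph{linear}: each residual $q_1,\dots,q_4$ in \eqref{Q1Q2Q3Q4} is an affine function of the unknowns $(f_\Re,f_\Im,g_\Re,g_\Im,\hat p_\Re,\hat p_\Im,v_\Re,v_\Im)$, so that every $\mathcal{Q}_i=\tfrac12\|q_i\|^2$ is convex and continuous and hence already \emph{weakly} lower semicontinuous. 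Consequently I would build $\mathcal{T}$ in \eqref{eq:ss_topologyT} entirely from weak (and weak-$*$) convergences, avoiding the strong $H^1$ convergence that was indispensable in the two previous examples because of their bilinear $\sqrt{\sigma}\,\nabla\phi$ terms. With this observation, Assumption \ref{ass:Maao}(i) is precisely the prior bound \eqref{eq:ss_priori_information_fg}; (ii) holds since $\mathcal{R}$ is a sum of squared norms, hence nonnegative and finite at the exact solution; the lower semicontinuity required in (iv) follows for $\mathcal{R}$, $\widetilde{\mathcal{R}}$ and $\mathcal{S}$ from weak/weak-$*$ lower semicontinuity of norms and for $\mathcal{Q}_E(\cdot;z)$ from the convexity just noted (with $C^\ri(z)$ a fixed function for fixed $z$); and (v) may be dropped by Remark \ref{rem:ass:Maao(v)}, since $\mathcal{S}$ in \eqref{eq:ss_S} is translation invariant and $CC^\ri=\id_{\mathbb{R}^{2L}}$.

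Next I would establish the compactness in Assumption \ref{ass:Maao}(iii). For $\|\cdot\|_B$-boundedness of $L_c$: the bound on $\mathcal{R}$ controls the $L^2$ norms of all eight components; feeding these into the bounds on $q_1,q_2$ (which then control $\nabla\hat p_\Re,\nabla\hat p_\Im$) and on $q_3,q_4$ (which control $\nabla\cdot v_\Re,\nabla\cdot v_\Im$) upgrades these to bounds on $\|\hat p_{\Re/\Im}\|_{H^1(\Omega)}$ and $\|v_{\Re/\Im}\|_{H(\dive,\Omega)}$; the bound on $\widetilde{\mathcal{R}}$ controls the $\mathcal{M}$-norms of $\nabla\cdot f_{\Re/\Im}$ and $\nabla g_{\Re/\Im}$; and the discrepancy term bounds the point-value vector. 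From a $\|\cdot\|_B$-bounded sequence in $L_c$ I can therefore extract weak limits of $f,g,\hat p,v$ in the respective Hilbert spaces, weak-$*$ limits of $\nabla\cdot f_{\Re/\Im},\nabla g_{\Re/\Im}$ in $\mathcal{M}=C_b(\Omega)^*$, and, by finite-dimensionality of $\mathbb{R}^{2L}$, a convergent limit of the point-value vectors.

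The hard part will be the last line of \eqref{eq:ss_topologyT}: showing that this limit of point values \emph{coincides} with the point values $\big(\hat p_\Re(x_\ell),\hat p_\Im(x_\ell)\big)$ of the weak limit, so that the limit genuinely lies in $X\times V$ and $\mathcal{T}$-compactness holds. This is delicate, because weak $H^1$ convergence alone does not control point values, and because the residual bound only gives $q_i\in L^2$, so that each $\hat p_{\Re/\Im}$ (together with its $C^\ri$ part) solves a Helmholtz equation on $\Omega_{ms}$ with right-hand side merely in $H^{-1}$ -- too weak for the naive $H^2\hookrightarrow C(\overline{\Omega_{ms}})$ argument. My plan to circumvent this is to exploit that the sources vanish on $\Omega_{ms}$ by \eqref{eq:ss_X} and to use a Helmholtz mean-value representation on small balls $B_r(x_\ell)$ compactly contained in $\Omega_{ms}$: writing $p_\Re=\hat p_\Re+\sum_\ell y_{\Re,\ell}p_{0\Re,\ell}$, one obtains an identity expressing $p_\Re(x_\ell)$ as a spherical mean of $p_\Re$ over $\partial B_r(x_\ell)$ plus a volume integral of the forcing $F=\omega q_4+\nabla\cdot q_1$ against a kernel that is \emph{smooth} across the centre (unlike the singular fundamental solution). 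The spherical means then converge along the subsequence by the compactness of the trace embedding $H^1(B_r)\to L^2(\partial B_r)$; the $q_4$-part of the volume term converges by weak $L^2$ convergence against the smooth kernel; and the troublesome $\nabla\cdot q_1$-part is handled by integrating the divergence by parts onto the smooth kernel, reducing it again to weak $L^2$ convergence of $q_1$. This yields $\hat p^n_{\Re/\Im}(x_\ell)\to\hat p_{\Re/\Im}(x_\ell)$ and hence the desired consistency, completing Assumption \ref{ass:Maao}(iii).

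Finally, Assumption \ref{ass:Maao_newcondition} I would verify through Remark \ref{rem:Ass3}, with $W=L^2(\Omega)^d$ for $\mathcal{Q}_1,\mathcal{Q}_2$ and $W=L^2(\Omega)$ for $\mathcal{Q}_3,\mathcal{Q}_4$, the data-dependent operators being $D_1C^\ri y=\nabla\sum_\ell y_{\Re,\ell}p_{0\Re,\ell}$, $D_2C^\ri y=\nabla\sum_\ell y_{\Im,\ell}p_{0\Im,\ell}$, $D_3C^\ri y=-\tfrac{\omega}{c_0^2}\sum_\ell y_{\Im,\ell}p_{0\Im,\ell}$ and $D_4C^\ri y=\tfrac{\omega}{c_0^2}\sum_\ell y_{\Re,\ell}p_{0\Re,\ell}$, all bounded linear maps $\mathbb{R}^{2L}\to W$; since $q_i(x^\dagger,\hat u^\dagger;y)=0$, part (ii) even holds with the quadratic $\gamma(t)=c\,t^2$, which is exactly what makes the stated parameter choice $\delta^2/\alpha(\delta,y^\delta)\le c_0$ fit the requirement $\gamma(\delta)/\alpha_j\le c$ of Theorem \ref{the:main_theorem}(c). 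Invoking Theorem \ref{the:main_theorem} then yields (i), (ii) and (iii). The single genuine obstacle in this programme is the point-value consistency of the weak limit described above; everything else is a direct transcription of the two preceding corollaries.
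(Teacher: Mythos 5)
Your plan coincides with the paper's proof in every routine step: Assumption \ref{ass:Maao}(i),(ii),(iv),(v) are disposed of exactly as you describe, the $\|\cdot\|_B$-boundedness of $L_c$ is obtained by the same triangle-inequality bootstrap (the paper writes out precisely your estimate, bounding $\|\nabla\hat p_\Re\|_{L^2(\Omega)^d}$ by $\mathcal{Q}_1$, $\mathcal{R}$ and the data terms), and Assumption \ref{ass:Maao_newcondition} is checked via Remark \ref{rem:Ass3} with the same operators $D_i$ (your formulas for $D_3,D_4$, without gradients, are in fact the correct ones; the paper's versions contain typos). The single point where you depart from the paper is the last line of \eqref{eq:ss_topologyT}, the consistency of limiting point values, and there your repair does not work.

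The flaw is the claim that the forcing can be paired against a kernel that is ``smooth across the centre''. In any representation of $p_\Re(x_\ell)$ through the forcing $F=\omega q_4+\nabla\cdot q_1$ on a ball $B_r(x_\ell)$, the kernel multiplying $F$ is a Green-type function and is necessarily singular at $x_\ell$ (logarithmic for $d=2$, of order $|x-x_\ell|^{-1}$ for $d=3$); it cannot be smooth, because point evaluation is not a bounded functional of solutions with data $(q_1,q_4)$ merely in $L^2\times L^2$. After you integrate the $\nabla\cdot q_1$ term by parts, $q_1$ is paired with the gradient of this kernel, of order $|x-x_\ell|^{1-d}$, which is not in $L^2(B_r)$ for $d\ge 2$; hence that pairing is not a bounded linear functional on $L^2$, and weak convergence of $q_1^n$ yields nothing (the integral need not even be finite). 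Indeed, the consistency you are after is simply false on $L_c$: take all components zero except $\hat p^n_\Re(x)=\max\{0,\,1-n|x-x_1|\}$, a hat of height $1$ supported in a fixed small ball inside $\Omega_{ms}$, away from $\partial\Omega$ and from the other $x_j$. Each such tuple lies in $X\times V$ ($\hat p^n_\Re$ is Lipschitz, so the continuity requirement in \eqref{eq:ss_V} holds, and the boundary conditions in \eqref{eq:ss_V0} hold since $\hat p^n_\Re$ vanishes near $\partial\Omega$ and $v\equiv 0$), and along the sequence $\mathcal{Q}_E$, $\mathcal{R}$, $\widetilde{\mathcal{R}}$ and the point-value vector stay bounded, so the sequence lies in $L_c$ for a suitable $c$; yet $\hat p^n_\Re\rightharpoonup 0$ in $H^1(\Omega)$ (for $d=3$ even strongly), while $\hat p^n_\Re(x_1)=1$ for all $n$. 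Thus no subsequence is $\mathcal{T}$-convergent, and $L_c$ is not $\mathcal{T}$-compact. You have in fact isolated a genuine gap of the paper itself: its one-sentence justification of Assumption \ref{ass:Maao}(iii) (weak compactness in Hilbert spaces, weak-$*$ compactness in $\mathcal{M}$) covers every line of \eqref{eq:ss_topologyT} except the point-value line, so it does not settle the matter either. But the counterexample shows this cannot be cured by a cleverer argument; it requires changing the setting, e.g.\ adding to $\mathcal{R}$ or $\widetilde{\mathcal{R}}$ a term that makes point evaluation compact near the $x_\ell$ (a local $H^s$ norm with $s>d/2$, or a H\"older bound enabling an Arzel\`a--Ascoli argument), or penalizing the second-order Helmholtz residual in $L^2(\Omega_{ms})$ so that sublevel sets are bounded in $H^2_{loc}(\Omega_{ms})$.
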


\begin{proof}
We will check each item in the Assumptions \ref{ass:Maao} and \ref{ass:Maao_newcondition} as below.
\begin{itemize}
\item Assumption \ref{ass:Maao} (i) follows from (\ref{eq:ss_priori_information_fg}) and the Assumption \ref{ass:Maao} (ii) from the definition of $\mathcal{R}$ (\ref{eq:ss_R}).
\item The $\|\cdot\|_B$ boundedness of $L_c$ follows from the definitions of $\mathcal{Q}_E$, $\mathcal{R}$, $\widetilde{\mathcal{R}}$, $\mathcal{S}$. The $\mathcal{T}$ compactness of $L_c$ follows from weak compactness of bounded sets in Hilbert spaces and weak* compactness of bounded subsets of $\mathcal{M}$ since it is the dual of a separable space. Hence, Assumption \ref{ass:Maao} (iii) is verified.
\\
Note that here in order to bound the $H^1 (\Omega)^2 \times H (\dive,\Omega)^2$ norm of the state we use the triangle inequality together with boundedness of $\mathcal{Q}_E$ and $\mathcal{R}$, e.g.
\[
\begin{split}
\|\nabla \hat{p}_\Re\|_{L^2(\Omega)^d}\leq&
\sqrt{2Q_1(f_\Re, \hat p_\Re, v_\Im, z_\Re)}
+\varrho_0\omega \|v_\Im\|_{L^2(\Omega)^d}\\
&+\|z_\Re\|_{\infty,\mathbb{R}^{2L}}\sum_{\ell=1}^L\|\nabla p_{0\Re,\ell}\|_{L^2(\Omega)^d}
+\|f_\Re\|_{L^2(\Omega)^d}\,.
\end{split}
\]
Doing so without including the $L^2$ norms of the states into $\mathcal{R}$, so just by using boundedness of $\mathcal{Q}_E$, of the $L^2$ norms of the sources, and continuity of the embeddings $H^1 (\Omega)\to L^2(\Omega)$ and $H (\dive,\Omega)\to L^2(\Omega)^d$ together with some elimination strategy would not work, since the zero derivative terms of the states in $\mathcal{Q}_E$ come with frequency dependent factors that will typically be larger than the embedding constants of $H^1(\Omega)\to L^2(\Omega)$ and $H (\dive,\Omega)\to L^2(\Omega)^d$.
\item Assumption \ref{ass:Maao} (iv) follows from linearity of the operators inside the norms and weak lower semicontinuity of the norms.
\item Assumption \ref{ass:Maao} (v) is obvious by Remark \ref{rem:ass:Maao(v)}.
\item Assumption \ref{ass:Maao_newcondition} is verified by Remark \ref{rem:Ass3} with $W=L^2(\Omega)^d$, 
\[
\begin{split}
&D_1(f_\Re, \hat p_\Re, v_\Im) C^\ri(z)=\sum_{\ell=1}^L z_{\Re,\ell}\nabla p_{0\Re,\ell}, \
D_3(g_\Re, \hat p_\Im, v_\Re) C^\ri(z)=-\frac{\omega}{c_0^2}\sum_{\ell=1}^L z_{\Im,\ell}\nabla p_{0\Im,\ell}\,,\\
&D_2(f_\Im, \hat p_\Im, v_\Re) C^\ri(z)=\sum_{\ell=1}^L z_{\Im,\ell}\nabla p_{0\Im,\ell}, \ 
D_4(g_\Im, \hat p_\Re, v_\Im) C^\ri(z)=-\frac{\omega}{c_0^2}\sum_{\ell=1}^L z_{\Re,\ell}\nabla p_{0\Im,\ell}\,,
\end{split}
\]
cf. \eqref{eq:ss_Cri}, \eqref{eq:ss_QP}, \eqref{Q1Q2Q3Q4}.
\end{itemize}
\end{proof}

\section{Conclusions and outlook}
In this paper we have contributed some further examples of variational formulations for inverse problems cf. \cite{AndrieuxBarangerBenAbda2006,BrownJaisKnowles2005,BrownJais2011,Knowles1998,KnowlesWallace1996,KohnVogelius87,KM90,Ron07} 
and additionally provided a regularization framework for these formulations, following up on \cite{Kal18} and augmenting it by the idea of data inversion using a right inverse $C^\ri$ of the observation operator.

Future research in this context will be on the question of optimally selecting this right inverse operator, as well as on further applications. While the examples in this paper are from electromagnetics and acoustics and there are certainly many other relevant inverse problems in this physical context, we are also interested in extending the scope to problems from elasticity as arising in medical (e.g., elastography) and engineering (e.g., nondestructive testing) applications.
Also time dependent problems will be considered in a next step -- the time domain version of the problem from Section \ref{sec:SoundSources} is already one of them. Other wave models (electromagnetics, elasticity) also allow for such a first order system formulation \cite{KirschRieder2016} and have many important real world applications.

\section*{Appendix}
In this section we will state a few relations between boundedness of the sequence $(\mu^n)_{n\in\mathbb{N}}$ in $L^\infty([0,\infty))$ and boundedness of $(\mu^n(h^n))_{n\in\mathbb{N}}$ in $L^\infty(\Omega)$ for $h^n,h\in C^{k,\beta}(\Omega)$ $h^n\to h$ in $C^{\ell,\gamma}(\Omega)$.

First of all, for Lipschitz continuous functions $h^n:\Omega\to\mathbb{R}$ the inequality
\begin{equation}\label{munHnLinfty}
\|\mu^n(h^n)\|_{L^\infty(\Omega)} \geq \|\mu^n\|_{L^\infty(h^n(\Omega))},
\end{equation}
holds, which can be seen as follows.
Using the definition of the $L^\infty$ norm on $\Omega$ with the Borel sigma algebra and the Lebesgue measure $\lambda^d$
\[
\|\mu^n(h^n)\|_{L^\infty(\Omega)} = \inf R^n 
\]
where 
\begin{eqnarray*}
R^n&=&\{c\geq0\, : \, \exists N\subseteq\Omega, \,\lambda^d(N)=0\, \forall x\in \Omega\setminus N\, : \mu^n(h^n(x))\leq c\}\\
&=&\{c\geq0\, : \, \exists N\subseteq\Omega, \,\lambda^d(N)=0\, \forall z\in h^n(\Omega\setminus N)\, : \mu^n(z)\leq c\}
\end{eqnarray*}
and 
\[
h^n(\Omega)\setminus h^n(N)\subset h^n(\Omega\setminus N)\subset h^n(\Omega)\setminus h^n(\emptyset)
\]
we see that 
\[
c\in R^n\ \Rightarrow \ \exists \tilde{N}\subseteq h^n(\Omega), \,\lambda^1(\tilde{N})=0 \, \forall z\in h^n(\Omega)\setminus \tilde{N}\, : \mu^n(z)\leq c\,.
\]
Here we have made use of the fact that a Lipschitz continuous function maps sets of measure zero into sets of measure zero. Note that this is in general not true for H\"older continuous functions, the Cantor function being a well-known counterexample.
This proves \eqref{munHnLinfty}.

Now if $h^n$ converges to $h$ in $C(\Omega)$, then 
\[
\forall V\subset h(\Omega), \,\overline{V}\subset h(\Omega)^o\, \exists n_V\in\mathbb{N}\, \forall n\geq n_V: V\subseteq h^n(\Omega)
\]

Thus, altogether setting $C_V:= C+\max\{\|\mu^j\|_{L^\infty(V)}\, : \,j\in\{1,\ldots,n_V-1\}\}$ we have proven the following relation.
\begin{lemma}
Let, for all $n\in\mathbb{N}$, $\mu^n\in L^\infty(h(\Omega))$ and $h^n:\Omega\to\mathbb{R}$ be Lipschitz continuous and converge to $h$ in $C(\Omega)$, and assume that there exists $C>0$ such that 
\[
\forall n\in\mathbb{N} \, : \|\mu^n(h^n)\|_{L^\infty(\Omega)}\leq C
\]
Then for any $V\subset h(\Omega), \,\overline{V}\subset h(\Omega)^o$ there exists $C_V>0$ such that 
\[
\forall n\in\mathbb{N} \, : \|\mu^n\|_{L^\infty(V)}\leq C_V\,.
\]
\end{lemma}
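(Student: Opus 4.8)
The plan is to assemble the lemma from the two facts already established in the appendix, namely the norm comparison \eqref{munHnLinfty} and the eventual covering property of the ranges $h^n(\Omega)$; the only genuinely new ingredient is the bookkeeping needed to absorb the finitely many indices for which the covering has not yet taken effect.

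First I would fix $V$ with $\overline V \subset h(\Omega)^o$ and invoke the covering statement to obtain an index $n_V \in \mathbb{N}$ such that $V \subseteq h^n(\Omega)$ for all $n \ge n_V$. For such $n$, monotonicity of the essential supremum under enlargement of the domain, followed by \eqref{munHnLinfty} and the standing hypothesis, gives
\[
\|\mu^n\|_{L^\infty(V)} \le \|\mu^n\|_{L^\infty(h^n(\Omega))} \le \|\mu^n(h^n)\|_{L^\infty(\Omega)} \le C,
\]
so the bound holds uniformly for all but finitely many $n$.

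For the remaining indices $n \in \{1,\dots,n_V-1\}$, I would note that $V \subset h(\Omega)$ and $\mu^n \in L^\infty(h(\Omega))$ by hypothesis, so each $\|\mu^n\|_{L^\infty(V)}$ is a finite real number and their maximum is well-defined. Setting $C_V := C + \max\{\|\mu^j\|_{L^\infty(V)} : j \in \{1,\dots,n_V-1\}\}$, with the convention that the maximum over the empty index set is $0$ when $n_V = 1$, then dominates both regimes and yields $\|\mu^n\|_{L^\infty(V)} \le C_V$ for every $n \in \mathbb{N}$, as required.

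I do not expect any single step to be technically hard, since both preliminary facts are already in place; the care lies in cleanly separating the two regimes $n \ge n_V$ and $n < n_V$. The two points I would emphasize are that \eqref{munHnLinfty} crucially relies on Lipschitz (rather than merely H\"older) continuity of the $h^n$, because its proof uses that a Lipschitz map sends Lebesgue-null sets to null sets --- a property that fails for H\"older maps, as the Cantor function illustrates --- and that the covering $V \subseteq h^n(\Omega)$ holds only for $n$ large, which is precisely what forces the separate treatment of the initial segment of indices and hence the $\max$ term in the definition of $C_V$.
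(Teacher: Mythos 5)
Your proposal is correct and follows essentially the same route as the paper: the chain $\|\mu^n\|_{L^\infty(V)} \le \|\mu^n\|_{L^\infty(h^n(\Omega))} \le \|\mu^n(h^n)\|_{L^\infty(\Omega)} \le C$ for $n \ge n_V$ via the covering property and the inequality \eqref{munHnLinfty}, combined with the definition $C_V := C + \max\{\|\mu^j\|_{L^\infty(V)} : j \in \{1,\dots,n_V-1\}\}$ to absorb the initial indices, is precisely the paper's argument. Your added remarks on the role of Lipschitz continuity and the empty-maximum convention only make explicit what the paper leaves implicit.
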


Lipschitz continuity of $h^n$ and its convergence in $C(\Omega)$ along a subsequence can be achieved by choosing $s$ in the definition \eqref{eq:permeability_setting_R} of the regularization function $\mathcal{R}$ sufficiently large ($s>\frac{d}{2}+1$) and using Sobolev's embedding.
Note however, that this is not required for obtaining the well-definedness, boundedness and convergence results of Corollary \ref{cor:permeability}.

\medskip

On the other hand, $L^\infty([0,\infty))$ boundedness of the sequence $(\mu^n)_{n\in\mathbb{N}}$ clearly implies $L^\infty(\Omega)$ boundedness of $(\mu^n(h^n))_{n\in\mathbb{N}}$.
However the latter cannot be concluded from $L^\infty(h(\Omega))$ boundedness of $(\mu^n)_{n\in\mathbb{N}}$, even if $h^n\to h$ in $C^\infty(\Omega)$, as the simple counterexample $\Omega=(0,1)$, $h(x)=x$, $h^n(x)=x+\frac{1}{n}$, $\mu^n(z)=\begin{cases}0\mbox{ for }z\in(0,1)\\ \exp(3n-\frac{1}{z-1})\mbox{ for }z\geq1\end{cases}$ shows.

\section*{Acknowledgment}
This work was supported by the Austrian Science Fund FWF under the grant P30054.

\end{document}